\def\beq{\begin{equation} }\def\eeq{\end{equation} }\def\1{\mathbf{1}}
\numberwithin{equation}{section}
\newtheorem{lemma}{Lemma}
\newtheorem{theorem}{Theorem}
\newtheorem{proposition}{Proposition}
\newtheorem{corollary}[theorem]{Corollary}
\newtheorem{remark}{Remark}
\newcommand{\RR}{\mathbb{R}}
\newcommand{\tu}{\tilde{u}}
\newcommand{\sr}{\mathrm{SR1}}
\newcommand{\norm}[1]{\left\|#1\right\|}
\newcommand{\dotprod}[1]{\left\langle #1\right\rangle}
\def\tr{\mathrm{tr}}
\newcommand{\tG}{\widetilde{G}}
\newcommand{\msr}{\texttt{SR1\_CS}}
\newcommand{\msrs}{\texttt{SR1\_CS} }
\begin{document}
\title{
Explicit Superlinear Convergence Rates of  The SR1 Algorithm
}

\author{	Haishan Ye 
\thanks{Equal Contribution.}
\thanks{School of Management; Xi'an Jiaotong University; 
	\texttt{hsye\_cs@outlook.com, xiangyuchang@xjtu.edu.cn};}
\and
Dachao Lin\footnotemark[1]
\thanks{Academy for Advanced Interdisciplinary Studies;
	Peking University;
	\texttt{lindachao@pku.edu.cn};}
\and
Zhihua Zhang 
\thanks{School of Mathematical Sciences;
	Peking University;
	\texttt{zhzhang@math.pku.edu.cn}.}
\and 
Xiangyu Chang \footnotemark[2]
}
\date{\today}
\maketitle

\begin{abstract}
We study the convergence rate of the famous Symmetric Rank-1 (SR1) algorithm which has wide applications in different scenarios. 
Although it has been extensively investigated, SR1  still lacks a non-asymptotic superlinear rate compared with other quasi-Newton methods such as DFP and BFGS.
In this paper we address this problem.
Inspired by the recent work on explicit convergence analysis of quasi-Newton methods, we obtain the first explicit non-asymptotic rates of superlinear convergence for the vanilla SR1 methods with correction strategy to achieve the numerical stability.
Specifically, the vanilla SR1 with the correction strategy achieves the rates of the form $\left(\frac{4n\ln(e\kappa) }{k}\right)^{k/2}$ for general smooth strongly-convex functions where $k$ is the iteration counter,  $\kappa$ is the condition number of the objective function and $n$ is the dimension of the problem. 
For the quadratic function, the vanilla SR1 algorithm can find the optima of the objective function at most $n$ steps. 
\end{abstract}

\section{Introduction}

In this paper, we study an important kind of classical quasi-Newton method named SR1 for the smooth unconstrained optimization.
Similar to other quasi-Newton methods (e.g., DFP and BFGS), SR1 attempts to replace the exact Hessian in the Newton method with some approximation and the update of approximation only involves the gradients of the objective function.
Due to only using the gradients, quasi-Newton commonly can achieve much lower computation complexity compared with the exact Newton method.
The detailed introduction to quasi-Newton such as SR1, DFP, and BFGS can be found in Chapter 6 of \citep{nocedal2006numerical}.
And randomized quasi-Newton methods can be found in \citep{byrd2016stochastic,moritz2016linearly,gower2016stochastic,gower2017randomized,kovalev2020fast}.

Because of the low computation cost per iteration and fast convergence rate, quasi-Newton has been extensively studied, especially its convergence rate.
Many works in the literature have shown that quasi-Newton algorithms can achieve superlinear convergence rates \citep{nocedal2006numerical,broyden1970convergence,broyden1970convergence1,fletcher1970new,shanno1970conditioning,powell1971convergence,dixon1972quasi,dixon1972quasi1,broyden1973local,goldfarb1970family,wei2004superlinear}.

However, the superlinear convergence rates achieved in these works are only asymptotic, that is, the current works simply show that the ratio of successive residuals in the method tends to zero as the number of iterations goes to infinity, without providing any specific bounds on the corresponding rate of convergence.
Recently, \citet{rodomanov2021greedy} justified the first explicit rates for greedy quasi-Newton methods which employ the basis vectors and greedily select to maximize a certain measure of progress for Hessian approximation, opposed to classical quasi-Newton methods which use the difference of successive iterates for updating Hessian approximation.
\citet{lin2021faster} presented faster explicit rates for greedy SR1 and BFGS and their randomized version.
Inspired by \citet{rodomanov2021greedy}, the explicit superlinear convergence rates for restricted Broyden family quasi-Newton methods were first given in \citet{rodomanov2021rates}.
Specifically, they showed that BFGS can achieve the superlinear convergence rate of the form $\left(\frac{n\kappa}{k}\right)^{k/2}$, where $k$ is the iteration counter, $n$ is the dimension of the problem, $\kappa$ is the condition number of the objective function. 
Later, \citet{rodomanov2021new} provided improved convergence rates of restricted Broyden family quasi-Newton methods.
At the same time, \citet{jin2020non} gave the explicit superlinear convergence rates of DFP and BFGS based on the Frobenius-norm potential function, which was different from potential functions used in \citep{rodomanov2021rates,rodomanov2021new}.
These works fully exploit many existing tools developed for analyzing convergence rates of quasi-Newton methods such as different kinds of potential functions \citep{byrd1987global,byrd1989tool,byrd1992behavior}.

Though there have been some works in the literature that gave the explicit superlinear convergence rates of quasi-Newton methods,
the superlinear rate of SR1 is still mysterious.
Current explicit superlinear convergence rates only hold for  quasi-Newton methods in the restricted Broyden family where algorithms can be represented by the convex combination of DFP and BFGS \citep{rodomanov2021rates,rodomanov2021new,jin2020non}.
Unfortunately, the SR1 method does not belong to the restricted Broyden family.
In fact, the convergence properties of the SR1 method are not as well understood as those of
the BFGS method.
To the best of our knowledge, no local superlinear results similar to the ones of BFGS and DFP have been established, except the results for quadratic functions and $k$-steps superlinear convergence conditioned on several assumptions \citep{nocedal2006numerical}.

The hardness of analyzing of SR1 algorithm is due to the fact that there maybe exist some steps of the SR1 update being ill-defined.
Even for a convex quadratic function, there may be steps on which there is no symmetric rank-1 update that satisfies the secant equation \citep{nocedal2006numerical}.
And this will cause numerical instabilities and the breakdown of SR1.
These problems make it hard to describe the convergence dynamics of SR1.

In this paper, we focus on studying the explicit superlinear convergence rate of the classical SR1 algorithm which only involves the gradients of the objective function.
First, we propose a novel method to conquer the ill-definedness of the vanilla SR1 update.
Interesting, for the quadratic function, the restriction that initial Hessian approximation matrix $G_0$ satisfies that $ G_0 \succeq \nabla^2 f(x) $ will give a well-defined SR1 algorithm, where $\nabla^2 f(x)$ is the Hessian matrix.
For general strongly convex functions, not only requiring the restriction that $G_0 \succeq \nabla^2 f(x_0)$, where $x_0$ is the initial point, the correction strategy is also introduced.
In this paper, we refer to the SR1 algorithm with correction strategy as \msr.
Based on the numerical stable SR1 algorithm \msr, we show that \msrs can converge  superlinearly  for quadratic and general strongly convex functions and we also provide  explicit superlinear convergence rates. 

We summarize our contribution as follows.
\begin{enumerate}
	\item We propose a novel SR1 algorithm named \msrs which is numerically stable and its updates are well-defined.
	We also empirically validate the numerical stability of \msrs compared with the vanilla SR1 algorithm.
	\item We prove that \msrs achieves an explicit superlinear convergence rate  $\left(\frac{4n\ln (e\kappa) }{k}\right)^{k/2}$ for general smooth strongly-convex functions. 
	We also show that vanilla SR1 algorithm with initial Hessian approximation matrix $G_0$ satisfying $ G_0 \succeq \nabla^2 f(x) $ can achieve the superlinear convergence rate and will find the optima of the objective function at most $n$ steps for quadratic functions.
	\item Our paper provides the first explicit superlinear convergence rate for the SR1 type algorithm that only  uses the difference of successive iterates for updating Hessian approximation.
	To the best of our knowledge, no similar rate has been obtained  for SR1 algorithms before our work. 
\end{enumerate}

\subsection{Organization} 
In the remainder of this paper, we first introduce the notation used throughout this paper.
Section~\ref{sec:sr1} gives the update formula for the SR1 method and provides several important properties of the SR1 update.
Section~\ref{sec:quad} obtains the explicit superlinear convergence rates of SR1 for the quadratic function based on two different potential functions.
Section~\ref{sec:gen} extends the convergence rate of SR1 for the quadratic function to the general smooth strongly convex function.
Section~\ref{sec:exp} validates the numerical stability of \msr.
We compare convergence rates of SR1 derived in this paper with the greedy SR1 and existing quasi-newton methods in Section~\ref{sec:disc}.
Finally, we conclude our work in Section~\ref{sec:conc}.

\subsection{Notation}
In this paper, we consider the following unconstrained optimization problem
\begin{equation}  \label{eq:prob}
\min_{x\in\RR^n} f(x),
\end{equation}
where $f(x)$ is further assumed to be a convex and smooth function whose gradient and Hessian exist and are denoted as $\nabla f(x)$ and $\nabla^2 f(x)$, respectively. 
Denote by $\mu > 0$ the strong convexity parameter of $f$, and
by $L >0$ the Lipschitz constant of the gradient of $f$, both measured with respect to
$I$, where $I$ denotes the identity matrix:
\begin{align}
\mu I\preceq \nabla^2 f(x)\preceq L I. \label{eq:H}
\end{align}
Accordingly, we can define the condition number of the objection function 
\begin{align*}
	\kappa = \frac{L}{\mu}.
\end{align*}

The partial ordering of positive semi-definite matrices is defined in the standard way.
Letting $A$ and $B$ be two $n\times n$ positive semi-definite matrices, we call $A\preceq B$ if $x^\top (B-A)x \ge 0$ for all $x\in\RR^n$.
Given a positive semi-definite matrix $A$, we can define the $A$-norm as $\norm{x}_A = \sqrt{x^\top Ax}$.
We also define the local norm with respect to $x$ as follows:
\begin{equation}
\label{eq:xnorm}
\norm{u}_x =  \norm{x}_{\nabla^2 f(x)}.
\end{equation}
We refer to the inner product of two matrices as follows:
\begin{align}
\label{eq:dp}
\dotprod{A, B} = \tr(A^\top B),
\end{align}
where $\tr(\cdot)$ denotes the trace of a matrix.
$A$ and $B$ can be consistent matrices or vectors in Eqn.~\eqref{eq:dp}.
For a non-singular matrix $A$, we denote its determinant as $\det(A)$.

\section{SR1 Update}
\label{sec:sr1}
Let $A$ and $G$ be two positive definite matrices. Suppose that $A$ is the target matrix and $G$ is the current
approximation of the matrix $A$. The $\sr$ quasi-Newton updates of $G$
with respect to $A$ along a direction $u\in\RR^n\setminus\{0\}$ is the following class of updating
formulas:
\begin{equation}
\label{eq:Gp}
\sr(A,G,u)
=
\left\{
\begin{aligned}
	&G \qquad\qquad\qquad\qquad\qquad\quad\; \mbox{if } (G-A)u = 0,
	\\
	&G- \frac{(G-A)uu^\top (G-A)}{u^\top (G-A)u} \quad \mbox{otherwise}.
\end{aligned}
\right.
\end{equation}

Next, we  present several important properties of the $\sr$ update.
The first property states that each update of $\sr$ preserves the bounds
on the relative eigenvalues with respect to the target matrix.
\begin{lemma}
\label{lem:AGA}
Let  $A$ and $G$ be two positive definite matrices such
that
\begin{equation*}
	A\preceq G \preceq \eta A
\end{equation*}
for some $\eta \ge 1$.
Then for any $u\in\RR^n$, it holds that
\begin{align}
 A\preceq \sr(A,G,u) \preceq G \preceq \eta A. \label{eq:AGA}
\end{align}
\end{lemma}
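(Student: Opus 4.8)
The plan is to split along the two branches of the $\sr$ update \eqref{eq:Gp}. In the degenerate branch $(G-A)u=0$ the update returns $G$ unchanged, so the asserted chain $A\preceq \sr(A,G,u)\preceq G\preceq \eta A$ collapses to the hypothesis $A\preceq G\preceq \eta A$ and nothing needs to be done. All the content lies in the generic branch $(G-A)u\neq 0$, so I would fix that case and set $M:=G-A$. By the left hypothesis $A\preceq G$ this $M$ is symmetric positive semidefinite. First I would record that the denominator is strictly positive: for a PSD matrix $u^\top M u=0$ would force $Mu=0$, so $Mu\neq 0$ guarantees $u^\top M u>0$ and the correction is finite; since that correction is symmetric, $\sr(A,G,u)$ is again symmetric and the Loewner comparisons are meaningful.

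For the upper bound $\sr(A,G,u)\preceq G$ the computation is direct, as
$$G-\sr(A,G,u)=\frac{(Mu)(Mu)^\top}{u^\top M u}$$
is a nonnegative scalar times the rank-one PSD matrix $(Mu)(Mu)^\top$, hence PSD. Together with the hypothesis $G\preceq \eta A$ this already yields the last two inequalities of \eqref{eq:AGA}, so only the lower bound remains.

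The crux is the lower bound $A\preceq \sr(A,G,u)$, that is, showing $\sr(A,G,u)-A=M-\frac{(Mu)(Mu)^\top}{u^\top M u}\succeq 0$. Here I would introduce the PSD square root $M^{1/2}$ and substitute $v:=M^{1/2}u$, so that $Mu=M^{1/2}v$ and $u^\top M u=\norm{v}^2$. Factoring $M^{1/2}$ out on both sides gives
$$\sr(A,G,u)-A=M^{1/2}\Big(I-\frac{vv^\top}{\norm{v}^2}\Big)M^{1/2},$$
whose inner factor is the orthogonal projection onto the complement of $\mathrm{span}(v)$ and is therefore PSD. Thus the right-hand side has the form $M^{1/2}PM^{1/2}$ with $P\succeq 0$ and is PSD, giving $A\preceq \sr(A,G,u)$. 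I expect this square-root-and-project identity to be the only non-mechanical step: one has to check that $M^{1/2}$ factors cleanly out of both the $M$ term and the rank-one term and that the bracketed matrix is a genuine symmetric idempotent; the rest is bookkeeping.
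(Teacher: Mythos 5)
Your proof is correct and follows essentially the same route as the paper's: both hinge on the factorization $\sr(A,G,u)-A=(G-A)^{1/2}\bigl(I-\tfrac{vv^\top}{\|v\|^2}\bigr)(G-A)^{1/2}$ with $v=(G-A)^{1/2}u$ and the observation that the middle factor is an orthogonal projection, hence $0\preceq \sr(A,G,u)-A\preceq G-A$. The only differences are cosmetic: you additionally spell out that $u^\top(G-A)u>0$ in the generic branch and verify the upper bound separately via the rank-one PSD correction, whereas the paper reads both bounds off the single projection identity.
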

\begin{proof}
We can assume that $(G-A)u\neq 0$ since otherwise the claim is trivial.
Let us denote $G_+ = \sr(A,G,u)$.
When $A\preceq G\preceq \eta A$, we can obtain that
\[
	G_+ - A 
	\overset{\eqref{eq:Gp}}{=}
	G-A - \frac{(G-A)uu^\top (G-A)}{u^\top(G-A)u}
	=
	(G-A)^{1/2}\left(I - \frac{\tu\tu^\top}{\tu^\top\tu}\right) (G-A)^{1/2}
	\preceq G-A,
\]
where the second equality uses $\tu = (G-A)^{1/2}u$ and last inequality is because of $I - \frac{\tu\tu^\top}{\tu^\top \tu}$ is a projection matrix.
Therefore, we can obtain that
\[
	A\preceq G_+ \preceq G \preceq \eta A.
\]

\end{proof}

We first introduce a potential function which measures the approximation precision of $G$ to $A$.
The potential function is the simple trace potential function, which will be used only when one can guarantee  $A\preceq G$:
\begin{equation}
\label{eq:sig}
\sigma(A, G) \triangleq \tr(G-A) \ge 0.
\end{equation}
The trace potential function has been used to analyze the convergence properties of  greedy SR1 \citep{lin2021faster}.
Based on the trace potential function, the following lemma describes how the $\sr$ update improves the approximation of $A$.
\begin{lemma}
\label{lem:sig}
Let $A \preceq G$ and $u^\top(G-A)u >0 $. Then it holds that
\begin{align*}
	\sigma\left(A,\sr(A,G, u)\right) \le \left(1 - \frac{\lambda_{\min}(G-A)}{\sum_{i=1}^r\lambda_i(G-A)}\right)\cdot \sigma(A,G), 
\end{align*}
where $r$ is the rank of $G -A$ and $\lambda_{\min}(G-A)$ is the smallest non-zero eigenvalue of $G-A$. 
\end{lemma}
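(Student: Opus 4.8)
The plan is to reduce the statement to a single scalar inequality about a Rayleigh-type quotient. Since the hypothesis $u^\top(G-A)u > 0$ forces $(G-A)u \neq 0$, the update lands in the nontrivial branch of \eqref{eq:Gp}. Writing $M \triangleq G - A \succeq 0$ (nonnegative because $A \preceq G$), I would first expand the potential directly, using the cyclic property of the trace together with the scalar denominator $u^\top M u$:
\begin{align*}
\sigma\bigl(A, \sr(A,G,u)\bigr) = \tr\left(M - \frac{Muu^\top M}{u^\top M u}\right) = \tr(M) - \frac{u^\top M^2 u}{u^\top M u},
\end{align*}
where the second equality uses $\tr(Muu^\top M) = u^\top M^2 u$.

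Next I would observe that the nonzero eigenvalues of $M$ sum to its trace, so $\tr(M) = \sum_{i=1}^r \lambda_i(M)$ with $r = \mathrm{rank}(M)$. Consequently the right-hand side of the claimed bound is exactly $\tr(M) - \lambda_{\min}(M)$, and the whole lemma is equivalent to the single inequality
\begin{align*}
\frac{u^\top M^2 u}{u^\top M u} \ge \lambda_{\min}(M),
\end{align*}
where $\lambda_{\min}(M)$ denotes the smallest nonzero eigenvalue of $M$.

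The heart of the argument, and the step I expect to require the most care, is this last inequality. I would prove it by diagonalizing $M = \sum_i \lambda_i v_i v_i^\top$ with $\lambda_i \ge 0$ and expanding $u$ in the eigenbasis as $u = \sum_i c_i v_i$. Then $u^\top M u = \sum_i \lambda_i c_i^2$ and $u^\top M^2 u = \sum_i \lambda_i^2 c_i^2$, so the quotient becomes
\begin{align*}
\frac{u^\top M^2 u}{u^\top M u} = \frac{\sum_i \lambda_i \, (\lambda_i c_i^2)}{\sum_i (\lambda_i c_i^2)},
\end{align*}
which is precisely a weighted average of the eigenvalues $\lambda_i$ with nonnegative weights $w_i = \lambda_i c_i^2$. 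The subtle point, and the reason the bound involves the smallest \emph{nonzero} eigenvalue rather than simply $\lambda_{\min}$ of $M$, is that every weight attached to a zero eigenvalue vanishes; the average is therefore a genuine convex combination of the nonzero eigenvalues alone, and being bounded below by the smallest term of that combination, it is at least $\lambda_{\min}(M)$.

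Finally I would note that the hypothesis $u^\top M u > 0$ guarantees the total weight $\sum_i w_i$ is strictly positive, so the quotient is well defined throughout. Combining the established inequality with the trace identity from the first step then yields $\sigma(A,\sr(A,G,u)) = \tr(M) - \tfrac{u^\top M^2 u}{u^\top M u} \le \tr(M) - \lambda_{\min}(M) = \bigl(1 - \tfrac{\lambda_{\min}(M)}{\sum_{i=1}^r \lambda_i(M)}\bigr)\sigma(A,G)$, which is the claim.
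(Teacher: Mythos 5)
Your proof is correct, and while it opens with the same trace expansion as the paper --- reducing everything to the scalar inequality $\frac{u^\top M^2 u}{u^\top M u} \ge \lambda_{\min}(M)$ with $M = G-A$ --- it proves that key inequality by a genuinely different and, in fact, more careful route. The paper gets there in two steps: Cauchy--Schwarz gives $\frac{u^\top M^2 u}{u^\top M u} \ge \frac{u^\top M u}{u^\top u}$, and then it invokes $\frac{u^\top M u}{u^\top u} \ge \lambda_{\min}(M)$. But that second bound is \emph{false} in general when $\lambda_{\min}(M)$ denotes the smallest \emph{non-zero} eigenvalue and $u$ has a component in $\ker(M)$ (take $M = \mathrm{diag}(1,0)$ and $u = (1,10)^\top$, so the Rayleigh quotient is $1/101$), and the singular case is exactly the relevant one here since $G-A$ loses rank at every SR1 step. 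Your eigendecomposition argument sidesteps this: writing the quotient as $\frac{\sum_i \lambda_i w_i}{\sum_i w_i}$ with weights $w_i = \lambda_i c_i^2$, the kernel directions receive zero weight, so the quotient is a convex combination of the non-zero eigenvalues alone and is bounded below by the smallest of them. In short, your proof establishes the lemma as stated, whereas the paper's intermediate chain only does so after the repair you have effectively supplied; the only thing the paper's route buys is brevity.
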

\begin{proof}
Let us denote $G_+ \triangleq \sr(A,G,u)$. Then we have
\begin{align*}
	\tr(G_+ - A) 
	=&
	\tr\left(G - A- \frac{(G-A)uu^\top (G-A)}{u^\top (G-A)u}\right)
	\\
	=&
	\tr(G - A) - 
	\frac{u^\top (G-A)^2 u}{u^\top (G-A)u}
	\\
	\le&
	\tr(G-A) - \frac{u^\top (G-A)u}{u^\top u}
	\\
	\le&
	\tr(G-A) - \lambda_{\min}(G-A)
	\\
	=&
	\tr(G-A) - \tr(G-A)/\left(\frac{\sum_{i=1}^r\lambda_i(G-A)}{\lambda_{\min}(G-A)}\right)
	\\
	=&
	\left(1 - \frac{\lambda_{\min}(G-A)}{\sum_{i=1}^r\lambda_i(G-A)}\right)\cdot \tr(G-A),
\end{align*}
where the first inequality is because of Cauchy's inequality that
\begin{align*}
\sqrt{(u^\top (G - A)^2 u)(u^\top u)} \ge  u^\top (G - A)u \ge 0,
\end{align*}
and the second inequality is because of $u^\top(G-A) u > 0$ and the fact
\begin{align*}
\frac{u^\top (G-A)u}{u^\top u} \ge \lambda_{\min}(G-A).
\end{align*}
\end{proof}

Furthermore, the update of SR1 will reduce the rank of $G-A$ but keeps the condition number  $\kappa(G-A) \triangleq \frac{\lambda_{\max}(G-A)}{\lambda_{\min}(G-A)}$ non-increasing.
\begin{lemma}
Let $A \preceq G$ and $u^\top(G-A)u >0 $. Then it holds that
\begin{align*}
\mathrm{rank}(\sr(A,G,u) - A) = \mathrm{rank}(G-A) - 1,
\end{align*}
and
\begin{align*}
\kappa(\sr(A,G,u) - A) \le \kappa(G-A),
\end{align*}
where  $\kappa(G-A) \triangleq \frac{\lambda_{\max}(G-A)}{\lambda_{\min}(G-A)}$ and $\lambda_{\min}(G-A)$ is the smallest non-zero eigenvalue of $G-A$.
\end{lemma}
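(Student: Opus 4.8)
The plan is to reduce everything to the symmetric factorization already obtained in the proof of Lemma~\ref{lem:AGA}. Writing $M := G-A \succeq 0$ and $G_+ := \sr(A,G,u)$, that computation shows
\[
G_+ - A = M^{1/2}\Bigl(I - \frac{\tu\tu^\top}{\tu^\top\tu}\Bigr)M^{1/2} = M^{1/2}PM^{1/2},
\]
where $\tu = M^{1/2}u$ and $P = I - \tu\tu^\top/(\tu^\top\tu)$ is the orthogonal projection onto $\tu^{\perp}$. The hypothesis $u^\top(G-A)u = \tu^\top\tu > 0$ guarantees $\tu \neq 0$, so $P$ is well defined and has rank $n-1$; moreover $\tu$ lies in $\mathrm{range}(M^{1/2})$ by construction. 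Set $M_+ := G_+ - A$ throughout.

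For the rank claim I would exploit that $P$ is symmetric and idempotent, so $M_+ = (PM^{1/2})^\top (PM^{1/2})$ and hence $\mathrm{rank}(M_+) = \mathrm{rank}(PM^{1/2}) = \dim\!\bigl(P\cdot\mathrm{range}(M^{1/2})\bigr)$. Let $V = \mathrm{range}(M^{1/2})$, an $r$-dimensional space with $r = \mathrm{rank}(M)$. The restriction of $P$ to $V$ has kernel $V \cap \mathrm{span}(\tu)$, and since $\tu \in V$ this intersection equals $\mathrm{span}(\tu)$, which is one-dimensional; rank--nullity then gives $\dim(PV) = r-1$. This is the step I expect to be the main obstacle, and it is exactly where the hypothesis $u^\top(G-A)u>0$ does the real work: it forces the projected-out direction $\tu$ to sit inside $\mathrm{range}(M^{1/2})$ rather than in its kernel, so the projection lowers the rank by exactly one instead of leaving it unchanged.

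For the condition-number claim I would note that $M - M_+ = M^{1/2}(I-P)M^{1/2} = ww^\top$ with $w = M^{1/2}\tu/\norm{\tu}$, a rank-one positive semidefinite matrix, so that $M_+ = M - ww^\top$ is a downward rank-one perturbation of $M$. Ordering eigenvalues decreasingly, Cauchy interlacing yields $\lambda_{i+1}(M) \le \lambda_i(M_+) \le \lambda_i(M)$ for every $i$. Since $M$ has rank $r$ and $M_+$ has rank $r-1$, the largest eigenvalues obey $\lambda_{\max}(M_+) = \lambda_1(M_+) \le \lambda_1(M) = \lambda_{\max}(M)$, while taking $i=r-1$ gives $\lambda_{\min}(M_+) = \lambda_{r-1}(M_+) \ge \lambda_r(M) = \lambda_{\min}(M)$ for the smallest nonzero eigenvalues. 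Dividing the two bounds produces $\kappa(M_+) \le \kappa(M)$. The only bookkeeping care needed here is to track which ordered eigenvalue is the smallest nonzero one on each side, so that the interlacing inequality is applied at the correct index; once the rank drop is established, this is routine.
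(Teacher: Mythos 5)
Your proposal is correct, and the second half (the condition-number bound) is essentially identical to the paper's: both write $G-A = (G_+-A) + ww^\top$ with $w$ proportional to $(G-A)u$ and invoke Cauchy interlacing for a rank-one positive semidefinite perturbation, reading off $\lambda_{\max}(G_+-A)\le\lambda_{\max}(G-A)$ and $\lambda_{\min}(G_+-A)\ge\lambda_{\min}(G-A)$ at the appropriate indices. For the rank claim the two arguments diverge in mechanics. The paper simply multiplies the update by $u$ to get $(G_+-A)u=0$ and then asserts, from $u^\top(G-A)u>0$, that the rank drops by exactly one; the justification that it drops by \emph{exactly} one (rather than more) is left implicit, resting on the fact that $G-A$ and $G_+-A$ differ by a rank-one matrix. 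You instead reuse the factorization $G_+-A = M^{1/2}PM^{1/2}$ from the proof of Lemma~\ref{lem:AGA}, write $G_+-A=(PM^{1/2})^\top(PM^{1/2})$, and apply rank--nullity to the restriction of $P$ to $\mathrm{range}(M^{1/2})$, using that $\tu=M^{1/2}u$ is a nonzero vector of that range. This is a cleaner and fully self-contained derivation of the exact rank drop, correctly handles the case where $M=G-A$ is singular, and makes explicit where the hypothesis $u^\top(G-A)u>0$ enters; the paper's route is shorter but leans on the reader to supply the ``at most one'' half of the count. Both are valid.
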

\begin{proof}
Let us denote $G_+ \triangleq \sr(A,G,u)$.
First, by the SR1 update, we have
\begin{align*}
G_+ - A = G-A - \frac{(G-A)uu^\top (G-A)}{u^\top (G-A)u}.
\end{align*}
Multiplying $u$ to both sides of above equation, we can obtain that
\begin{align*}
(G_+ - A)u = (G-A) u - (G-A)u = 0.
\end{align*}
Since $u^\top(G-A)u > 0$, we can conclude that $\mathrm{rank}(G_+ - A) = \mathrm{rank}(G-A) - 1$.

Also by the SR1 update, we have
\begin{align*}
G - A = G_+ -A + \frac{(G-A)uu^\top (G-A)}{u^\top (G-A)u}.
\end{align*}
Because of $u^\top (G-A) u >0$, $G$ equals $G_+$ plus a rank one positive semi-definite matrix and $\mathrm{rank}(G-A) = \mathrm{rank}(G_+-A) +1$ , then by the interlacing property \citep{horn2012matrix}, we can obtain that 
\begin{align*}
\lambda_{\max}(G-A) \ge \lambda_{\max} (G_+ - A) \ge\dots\ge\lambda_{\min}(G_+ - A) \ge\lambda_{\min}(G-A).
\end{align*}
Therefore, we can obtain that
\begin{align*}
\kappa(G_+ - A) \le \kappa(G-A).
\end{align*}
\end{proof}

We introduce another potential function $V(A, G)$, which plays important roles in our analysis.
\begin{align}
	V(A,G) \triangleq \ln\det\left(GA^{-1}\right),  A,G \succeq 0. \label{eq:tau}
\end{align}
The function $V(A,G)$ has been used to prove the explicit superlinear convergence rate of a class of restrict Broyden quasi-Newton in \citep{rodomanov2021rates}. 
In this paper, we will also use the following measure function to describe the closeness of $ G $ to $ A $ along direction $ u \in \mathbb{R}^n \backslash \{0\} $:
\begin{align}
	\nu(A, G, u) \triangleq \left(\dfrac{u^\top(G-A)G^{-1}(G-A)u}{u^\top(A-AG^{-1}A)u}\right)^{1/2}, G\succeq A. \label{eq:theta}
\end{align}
We can observe that $u^\top (A - AG^{-1}A)u$ in Eqn.~\eqref{eq:theta} is also a factor in the inverse update of SR1 (refer to Eqn.~\eqref{eq:invsr1}).
Thus, the measure function Eqn.~\eqref{eq:theta} is designed only for the SR1 algorithm.
 
Based on the potential function $V(A,G)$ and $\nu(A,G,u)$ defined in Eqn.~\eqref{eq:tau} and \eqref{eq:theta} respectively, the following lemma describes how $\sr$ update improves the approximation of $A$ other than Lemma~\ref{lem:sig}.
\begin{lemma}
	Let $G \succeq A \succ 0$, then for any $u \in \mathbb{R}^n \backslash \{0\}$:
	\begin{align}
		V(A,G) - V(A,\sr(A,G,u)) = \ln\left(1 + \nu^2(A,G,u)\right). \label{eq:dv}
	\end{align}
\end{lemma}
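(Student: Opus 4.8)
The plan is to reduce the difference of potential values to a ratio of determinants and then invoke the matrix determinant lemma. Writing $G_+ = \sr(A,G,u)$ and recalling $V(A,G) = \ln\det(GA^{-1})$, I would first observe that
\[
V(A,G) - V(A,G_+) = \ln\det(GA^{-1}) - \ln\det(G_+A^{-1}) = \ln\frac{\det G}{\det G_+},
\]
since the $A^{-1}$ factors cancel inside the log of the ratio. This isolates the whole problem into computing $\det G_+ / \det G$. For the trivial branch $(G-A)u = 0$ of \eqref{eq:Gp} we have $G_+ = G$, so the left-hand side is $0$; the numerator $u^\top(G-A)G^{-1}(G-A)u$ of $\nu^2(A,G,u)$ vanishes as well, giving $\ln(1+\nu^2)=0$, so the identity holds. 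Henceforth I assume $(G-A)u \neq 0$; because $G \succeq A$ forces $G-A \succeq 0$, this is equivalent to $c := u^\top(G-A)u > 0$, putting us in the nontrivial branch.

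Next I would set $w := (G-A)u$ and write $G_+ = G - \frac{ww^\top}{c}$, so that the matrix determinant lemma yields
\[
\frac{\det G_+}{\det G} = \det\!\left(I - \frac{G^{-1}ww^\top}{c}\right) = 1 - \frac{w^\top G^{-1}w}{c} = 1 - \frac{p}{c},
\]
where $p := u^\top(G-A)G^{-1}(G-A)u = w^\top G^{-1} w$. Taking reciprocals and logarithms, the target reduces to the scalar identity $\frac{c}{c-p} = 1 + \nu^2(A,G,u) = 1 + \frac{p}{q}$, with $q := u^\top(A - AG^{-1}A)u$. Cross-multiplying, this is equivalent to the single relation $p = c - q$.

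The crux is therefore the matrix identity $(G-A)G^{-1}(G-A) = (G-A) - (A - AG^{-1}A)$, which gives $p = c - q$ after sandwiching both sides with $u$. I expect this algebraic step to be the main (though routine) obstacle: one expands $(G-A)G^{-1}(G-A) = (G-A) - (G-A)G^{-1}A$ and simplifies $(G-A)G^{-1}A = A - AG^{-1}A$, so that the right-hand side collapses to $G - 2A + AG^{-1}A$, matching the left-hand side. A secondary point requiring care is the well-definedness of $\nu$, namely that $q > 0$ so the denominator in \eqref{eq:theta} is legitimate. This follows because Lemma~\ref{lem:AGA} gives $A \preceq G_+$, hence $G_+ \succ 0$ and $\det G_+ > 0$, so $1 - p/c > 0$, i.e. $p < c$, whence $q = c - p > 0$. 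Combining the determinant computation with $p = c-q$ then gives $\det G / \det G_+ = \frac{c}{c-p} = 1 + \nu^2(A,G,u)$, and taking logarithms completes the proof.
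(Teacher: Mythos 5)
Your proof is correct and takes essentially the same route as the paper's: both reduce the claim to evaluating $\det\left(GG_+^{-1}\right)$ for a rank-one modification via the matrix determinant lemma, the only difference being that the paper applies it to the inverse update $G_+^{-1}=G^{-1}+\tfrac{(I-G^{-1}A)uu^\top(I-AG^{-1})}{u^\top(A-AG^{-1}A)u}$ while you apply it to $G_+=G-ww^\top/c$ and close the gap with the identity $(G-A)G^{-1}(G-A)=(G-A)-(A-AG^{-1}A)$, which is the same algebra in dual form. Your explicit handling of the degenerate branch $(G-A)u=0$ and of the strict positivity of the denominator $u^\top(A-AG^{-1}A)u$ is a small bonus that the paper leaves implicit.
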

\begin{proof}
	Let us denote $G_+ = \sr(A,G,u)$.
	From SR1 update rule, we can obtain the following inverse update
	\begin{align}
		G_+^{-1} = G^{-1} + \dfrac{(I-G^{-1}A)uu^\top(I-AG^{-1})}{u^\top(A-AG^{-1}A)u}. \label{eq:invsr1}
	\end{align}
Thus, we have
	\begin{align*}
		\det\left(GG_{+}^{-1}\right) &\stackrel{\eqref{eq:invsr1}}{=} \det\left(
		I + \dfrac{G(I-G^{-1}A)uu^\top(I-AG^{-1})}{u^\top(A-AG^{-1}A)u} \right) \\
		&=1+ \dfrac{u^\top(I-AG^{-1})G(I-G^{-1}A)u}{u^\top(A-AG^{-1}A)u} \\
		&=1+ \dfrac{u^\top(G-A)G^{-1}(G-A)u}{u^\top(A-AG^{-1}A)u} \\
		&= 1 + \nu^2(A,G,u).
	\end{align*}
	Thus we have
	\begin{align*}
		V(A,G) - V(A,G_+) = \ln\det\left(GG_{+}^{-1}\right) = \ln\left(1 + \nu^2(A,G,u)\right).
	\end{align*}
\end{proof}

Furthermore, the measure function $\nu(A,G,u)$ also has the following property.
\begin{lemma}
	\label{lem:sig1}
	If $G \succeq A$, it holds that
	\begin{align}
		\nu^2(A,G,u) \ge
		\frac{u^\top(G-A)G_+^{-1}(G-A)u}{u^\top Gu }, \ G_+ = \sr(A,G,u). \label{eq:tt}
	\end{align}
\end{lemma}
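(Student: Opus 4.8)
The plan is to reduce the matrix inequality \eqref{eq:tt} to a scalar comparison by exploiting the explicit inverse update formula \eqref{eq:invsr1}. First I would dispose of the trivial case $(G-A)u=0$: here $\sr(A,G,u)=G$, the numerator $u^\top(G-A)G_+^{-1}(G-A)u$ vanishes, and $\nu^2(A,G,u)=0$ as well by \eqref{eq:theta}, so \eqref{eq:tt} holds with equality. Assume therefore $(G-A)u\neq 0$. Setting $w\triangleq (G-A)u$ and $s\triangleq u^\top(A-AG^{-1}A)u$, one checks that $G\succeq A\succ 0$ gives $A-AG^{-1}A\succeq 0$, and that $u^\top(A-AG^{-1}A)u=0$ forces $(G-A)u=0$; hence $s>0$ whenever $(G-A)u\neq 0$, all denominators below are positive, and $\nu^2(A,G,u)=\frac{w^\top G^{-1}w}{s}$.

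Next I would feed $w$ through the inverse update \eqref{eq:invsr1}. Since $(I-G^{-1}A)u=G^{-1}(G-A)u=G^{-1}w$, the rank-one correction acts on $w$ as
\begin{align*}
w^\top G_+^{-1}w = w^\top G^{-1}w + \frac{\left(w^\top G^{-1}w\right)^2}{s}.
\end{align*}
Writing $q\triangleq w^\top G^{-1}w=u^\top(G-A)G^{-1}(G-A)u\ge 0$, this reads $w^\top G_+^{-1}w=q(s+q)/s$. Because $\nu^2(A,G,u)=q/s$, the claimed inequality \eqref{eq:tt} becomes, after cancelling the common positive factor $q/s$ (the subcase $q=0$ being immediate),
\begin{align*}
u^\top Gu \ge s+q.
\end{align*}

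The only genuine computation is the algebraic identity $s+q=u^\top(G-A)u$. I would expand $(G-A)G^{-1}(G-A)=(I-AG^{-1})(G-A)=G-2A+AG^{-1}A$, so that $q=u^\top(G-2A+AG^{-1}A)u$; adding $s=u^\top(A-AG^{-1}A)u$ then cancels the $AG^{-1}A$ terms together with one copy of $A$, leaving $s+q=u^\top(G-A)u$. The reduced inequality $u^\top Gu\ge u^\top(G-A)u$ is thus equivalent to $u^\top Au\ge 0$, which holds since $A\succ 0$, establishing \eqref{eq:tt}. The main obstacle is simply keeping the matrix algebra straight in the expansion of $(G-A)G^{-1}(G-A)$ and its cancellation against $A-AG^{-1}A$; everything else is routine bookkeeping around the inverse update formula.
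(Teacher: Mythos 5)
Your proposal is correct and follows essentially the same route as the paper's proof: both apply the inverse update formula \eqref{eq:invsr1} to $w=(G-A)u$, obtain $w^\top G_+^{-1}w = a + a^2/b = a(a+b)/b$ with $a+b = u^\top(G-A)u$, and conclude via $u^\top(G-A)u \le u^\top Gu$. The only difference is cosmetic — you rearrange the final step into the scalar inequality $u^\top Au\ge 0$ and spell out the positivity of $s$ and the identity $s+q=u^\top(G-A)u$, which the paper states without derivation.
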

\begin{proof}
	If $(G - A)u = 0$, the $\nu(A,G,u) = 0$ and $\frac{u^\top(G-A)G_+^{-1}(G-A)u}{u^\top Gu } = 0$.
	Thus, Eqn.~\eqref{eq:tt} holds trivially.
	If $G \succeq A$ and $Gu\neq Au$, we obtain $u^\top (A - AG^{-1}A)u>0$, then the inequality is well-defined.
	
	Denoting
	$a = u^\top(G-A)G^{-1}(G-A)u$, $b = u^\top(A-AG^{-1}A)u$, then $a+b = u^\top(G-A)u$ and
	\begin{align*}
		u^\top(G-A)G_+^{-1}(G-A)u \overset{\eqref{eq:invsr1}}{=}& u^\top(G-A)G^{-1}(G-A)u + \dfrac{\left(u^\top(G-A)G^{-1}(G-A)u\right)^2}{u^\top(A-AG^{-1}A)u} \\
		=& a+\frac{a^2}{b} = \frac{a(a+b)}{b} = \frac{a}{b} \cdot \left(u^\top(G-A)u\right) \\
		\leq& \frac{a}{b} \cdot \left(u^\top Gu\right) = \nu^2(A,G,u) \cdot \left(u^\top Gu\right),
	\end{align*}
which concludes the proof.
\end{proof}

Lemma~\ref{lem:sig1} plays an important tool in analyzing the explicit convergence rate of the SR1 algorithm for general strongly convex functions.
\section{Unconstrained Quadratic Minimization}
\label{sec:quad}

In this section, we study the vanilla $\sr$  method, as applied to minimizing the quadratic function
\begin{equation}
\label{eq:lsr}
f(x) \triangleq \frac{1}{2} x^\top A x - b^\top x,
\end{equation}
where $A\in\RR^{n\times n}$ is a positive definite matrix and $b\in\RR^n$ is a vector.

\begin{algorithm}[htp]
\caption{$\mathrm{SR1}$ Update for Unconstrained Quadratic Minimization}
\begin{algorithmic}[1]
	\STATE Initialization: Choose $x_0$ and set $G_0 =  L \cdot I$.
	\FOR{$ k = 0,1,\dots , K$}
	\STATE Update $x_{k+1} = x_k - G_k^{-1}\nabla f(x_k)$, 
	\STATE Set $u_k = x_{k+1} - x_k$, 
	\STATE Compute $G_{k+1} = \sr(A,G_k,u_k)$ (By Eqn.~\eqref{eq:Gp}).
	\ENDFOR
\end{algorithmic}
\label{algo:sr1-update}
\end{algorithm}

Consider the  vanilla $\sr$ scheme (Algorithm~\ref{algo:sr1-update}) for minimizing  the problem~\eqref{eq:lsr}.
We assume that the smoothness parameter $L$ is available for  convenience analysis.
In an actual implementation of Algorithm~\ref{algo:sr1-update}, it is typical to store in memory and update in iterations the matrix $H_k \triangleq G_k^{-1}$  instead of $G_k$ (or, alternatively, the
Cholesky decomposition of $G_k$). This allows us to compute $G_{k+1}^{-1} \nabla f(x_{k+1})$ in $O(n^2)$
operations. 
Note that, due to a low-rank structure of the update \eqref{eq:Gp}, $H_k$ can be updated
into $H_{k+1}$ also in $O(n^2)$ operations.

To estimate the convergence rate of Algorithm~\ref{algo:sr1-update}, let us look at the norm of the gradient of $ f(x) $, measured with respect to $ A $:
\begin{equation}\label{eq:lambda}
\lambda_f(x) \triangleq \sqrt{\nabla f(x)^\top A^{-1} \nabla f(x)}, x\in\RR^n.
\end{equation}

It is known that Algorithm~\ref{algo:sr1-update} has at least a linear convergence rate of
the standard gradient method based on $\lambda_{f}(x)$:

\begin{lemma}
In Algorithm~\ref{algo:sr1-update},  it holds that for $k\ge 0$,	
\begin{equation}
	\label{eq:aga}
	A\preceq G_k \preceq \frac{L}{\mu} A~\text{ and }~ \lambda_f(x_k) \le \left(1 - \frac{\mu}{L}\right)^k\lambda_f(x_0).
\end{equation}
\end{lemma}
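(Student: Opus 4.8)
The plan is to prove the two assertions in \eqref{eq:aga} separately: the matrix sandwich $A\preceq G_k \preceq \kappa A$ by a direct induction on the SR1 recursion, and the linear decay of $\lambda_f$ by reducing the gradient dynamics to a symmetric linear map whose spectral norm is controlled by that sandwich.

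First I would establish $A \preceq G_k \preceq \kappa A$ (with $\kappa = L/\mu$) by induction on $k$. For the base case $k=0$ we have $G_0 = L\cdot I$; the smoothness bound $A \preceq LI$ gives $A \preceq G_0$, while the strong-convexity bound $\mu I \preceq A$ gives $G_0 = LI \preceq \frac{L}{\mu}A = \kappa A$, so $A \preceq G_0 \preceq \kappa A$. For the inductive step, assuming $A \preceq G_k \preceq \kappa A$, Lemma~\ref{lem:AGA} applied with $\eta = \kappa$ to $G_{k+1} = \sr(A, G_k, u_k)$ yields immediately $A \preceq G_{k+1} \preceq G_k \preceq \kappa A$. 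This closes the induction and en route guarantees each $G_k$ is invertible, so the iteration is well defined.

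Second, for the convergence of $\lambda_f$, I would exploit that $f$ is quadratic, so $\nabla f(x) = A(x - x^*)$ with $x^* = A^{-1}b$ the minimizer. Substituting into $x_{k+1} = x_k - G_k^{-1}\nabla f(x_k)$ gives $x_{k+1} - x^* = (I - G_k^{-1}A)(x_k - x^*)$. The natural move is to change variables to $z_k \triangleq A^{1/2}(x_k - x^*)$, for which $\lambda_f(x_k)^2 = (x_k - x^*)^\top A (x_k - x^*) = \norm{z_k}^2$, and the recursion becomes $z_{k+1} = N_k z_k$ with the \emph{symmetric} matrix $N_k \triangleq I - A^{1/2}G_k^{-1}A^{1/2}$.

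The crux is then to bound $\norm{N_k}$. From $A \preceq G_k$ we get $G_k^{-1} \preceq A^{-1}$, hence $A^{1/2}G_k^{-1}A^{1/2} \preceq I$ and $N_k \succeq 0$; from $G_k \preceq \kappa A$ we get $G_k^{-1} \succeq \kappa^{-1}A^{-1}$, hence $A^{1/2}G_k^{-1}A^{1/2} \succeq \kappa^{-1} I$ and $N_k \preceq (1 - \kappa^{-1})I = (1 - \mu/L)I$. Since $N_k$ is symmetric positive semidefinite with spectrum contained in $[0,\, 1-\mu/L]$, its spectral norm is at most $1 - \mu/L$, giving $\norm{z_{k+1}} \le (1 - \mu/L)\norm{z_k}$, i.e.\ $\lambda_f(x_{k+1}) \le (1 - \mu/L)\lambda_f(x_k)$. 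Iterating from $k=0$ produces the claimed bound $\lambda_f(x_k) \le (1 - \mu/L)^k \lambda_f(x_0)$. I do not expect a genuine obstacle here; the only point requiring care is performing the similarity transformation by $A^{1/2}$ so that the two-sided bound on $G_k$ translates cleanly into a spectral-norm estimate for the contraction operator, rather than attempting to bound the nonsymmetric map $I - G_k^{-1}A$ directly.
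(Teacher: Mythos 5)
Your proof is correct. The matrix sandwich half is exactly the paper's argument (base case $A \preceq G_0 = L\cdot I \preceq \frac{L}{\mu}A$ from \eqref{eq:H}, then Lemma~\ref{lem:AGA} to propagate it), and for the gradient decay the paper simply cites Theorem~3.1 of Rodomanov and Nesterov rather than proving it; your spectral argument via the symmetric contraction $N_k = I - A^{1/2}G_k^{-1}A^{1/2}$ with spectrum in $[0,\,1-\mu/L]$ is the standard self-contained proof of that cited fact (it is precisely the content of Lemma~\ref{lem:lambda} with $\eta_k = \kappa$ combined with the sandwich), so the two routes coincide in substance.
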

\begin{proof}
The result $\lambda_f(x_k) \le \left(1 - \frac{\mu}{L}\right)^k\lambda_f(x_0)$ has been proved in Theorem 3.1 of \citet{rodomanov2021rates}. 
The result $A\preceq G_k \preceq \frac{L}{\mu} A$ comes from Lemma~\ref{lem:AGA} and the fact that $A\preceq G_0 = L\cdot I\preceq \frac{L}{\mu} A$.
\end{proof}

\begin{lemma}[Lemma 3.2 of \citet{rodomanov2021greedy} ]\label{lem:lambda}
	Let $k\geq 0$, and let $\eta_k\geq 1$ be such that $A \preceq G_k\preceq\eta_k A$. Then for the quadratic function, we have that 
	$ \lambda_f(x_{k+1})\leq \left(1-\frac{1}{\eta_k}\right)\lambda_f(x_k)\leq (\eta_k-1)\lambda_f(x_k)$. 
\end{lemma}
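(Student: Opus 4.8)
The plan is to reduce the inequality to a spectral bound in the $A^{1/2}$-geometry. Since $f$ is quadratic, $\nabla f(x) = Ax - b$, so the update $x_{k+1} = x_k - G_k^{-1}\nabla f(x_k)$ gives the clean one-step recursion
\[
\nabla f(x_{k+1}) = \nabla f(x_k) - A G_k^{-1}\nabla f(x_k) = (I - A G_k^{-1})\nabla f(x_k).
\]
The matrix $I - AG_k^{-1}$ is not symmetric, which is why I would conjugate by $A^{-1/2}$: setting $h = A^{-1/2}\nabla f(x_k)$ and $\tG_k = A^{-1/2}G_k A^{-1/2}$, a direct computation using $G_k^{-1} = A^{-1/2}\tG_k^{-1}A^{-1/2}$ yields $A^{-1/2}\nabla f(x_{k+1}) = (I - \tG_k^{-1})h$. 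Since $\lambda_f(y)^2 = \norm{A^{-1/2}\nabla f(y)}^2$ for every $y$, this means $\lambda_f(x_{k+1})^2 = h^\top (I - \tG_k^{-1})^2 h$ and $\lambda_f(x_k)^2 = \norm{h}^2$, with $I - \tG_k^{-1}$ now symmetric.

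Next I would translate the hypothesis into the new coordinates. Conjugating $A \preceq G_k \preceq \eta_k A$ by $A^{-1/2}$ gives $I \preceq \tG_k \preceq \eta_k I$, and inverting reverses the order to $\frac{1}{\eta_k} I \preceq \tG_k^{-1} \preceq I$. Hence the symmetric matrix $I - \tG_k^{-1}$ is positive semidefinite with all eigenvalues in $[0, 1 - \frac{1}{\eta_k}]$, so $(I - \tG_k^{-1})^2 \preceq (1 - \frac{1}{\eta_k})^2 I$. Plugging this into the expression for $\lambda_f(x_{k+1})^2$ gives
\[
\lambda_f(x_{k+1})^2 \le \left(1 - \frac{1}{\eta_k}\right)^2 \norm{h}^2 = \left(1 - \frac{1}{\eta_k}\right)^2 \lambda_f(x_k)^2,
\]
and taking square roots yields the first inequality. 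The second inequality $1 - \frac{1}{\eta_k} \le \eta_k - 1$ is elementary: multiplying through by $\eta_k > 0$ makes it equivalent to $(\eta_k - 1)^2 \ge 0$, which holds for all $\eta_k \ge 1$.

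I do not expect a genuine obstacle here, since the statement is a known linear-algebra fact (indeed it is cited as Lemma 3.2 of \citet{rodomanov2021greedy}). The only point requiring care is the non-symmetry of $I - AG_k^{-1}$: bounding its action directly on $\nabla f(x_k)$ is awkward, whereas the conjugation by $A^{-1/2}$ symmetrizes the operator and reduces everything to the spectrum of $\tG_k$, where the hypothesis is tight and the bound $(1-\frac{1}{\eta_k})^2$ falls out immediately.
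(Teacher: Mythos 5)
Your proof is correct: the recursion $\nabla f(x_{k+1})=(I-AG_k^{-1})\nabla f(x_k)$, the symmetrization via $A^{-1/2}$, and the spectral bound $0\preceq I-\widetilde{G}_k^{-1}\preceq\left(1-\frac{1}{\eta_k}\right)I$ together give exactly the claimed contraction, and the elementary inequality $1-\frac{1}{\eta_k}\le\eta_k-1$ follows from $(\eta_k-1)^2\ge0$. The paper does not reprove this lemma (it is imported verbatim from the cited reference), and your argument is essentially the standard proof of that cited result, so there is nothing to reconcile.
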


Furthermore,  the Hessian approximation $G_i$'s and direction $u_i$'s have the following property.
\begin{lemma} \label{lem:uu}
If $u_i^\top (G_i - A) u_i > 0 $ for $i = 0,\dots, k$, where $u_i$ defined in Algorithm~\ref{algo:sr1-update}, then $u_i$'s are linearly independent for $i = 0,\dots, k$.
Furthermore, it holds that 
\begin{align}
Au_i = G_k u_i, \quad i = 0, 1,\dots, k-1. \label{eq:yGu}
\end{align}
\end{lemma}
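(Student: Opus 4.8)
The plan is to prove the two assertions in the reverse of the order stated, establishing the hereditary secant relation \eqref{eq:yGu} first and then deducing linear independence from it. The starting point, already recorded in the rank-reduction computation for the $\sr$ update, is that applying $\sr(A,G_i,u_i)$ along $u_i$ annihilates $u_i$: multiplying \eqref{eq:Gp} on the right by $u_i$ and using $u_i^\top(G_i-A)u_i>0$ gives $(G_{i+1}-A)u_i=0$, i.e. $Au_i=G_{i+1}u_i$. This is the single-step version of \eqref{eq:yGu}.

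To upgrade this to the full statement, I would fix $i$ and induct on the update index $j$ from $j=i+1$ up to $j=k$, showing that each subsequent $\sr$ update preserves the kernel direction $u_i$. Assuming $(G_j-A)u_i=0$, the update formula \eqref{eq:Gp} gives
\[
G_{j+1}u_i = G_ju_i - \frac{(G_j-A)u_j\,\big(u_j^\top(G_j-A)u_i\big)}{u_j^\top(G_j-A)u_j},
\]
and the scalar $u_j^\top(G_j-A)u_i=\big((G_j-A)u_i\big)^\top u_j$ vanishes by the inductive hypothesis together with the symmetry of $G_j-A$. Hence $G_{j+1}u_i=G_ju_i=Au_i$, so the correction term never disturbs $u_i$. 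Carrying the induction up to $j=k$ yields $Au_i=G_ku_i$ for every $i\le k-1$, which is exactly \eqref{eq:yGu}.

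For the linear independence I would induct on $k$. The base case $u_0\neq 0$ is immediate from $u_0^\top(G_0-A)u_0>0$. For the inductive step, suppose $u_0,\dots,u_{k-1}$ are linearly independent; if $u_0,\dots,u_k$ were dependent, then since $a_k\neq 0$ is forced, one could write $u_k=\sum_{i=0}^{k-1}c_iu_i$. Substituting this into $u_k^\top(G_k-A)u_k$ on one side and invoking the hereditary relation $(G_k-A)u_i=0$ for all $i\le k-1$ gives $u_k^\top(G_k-A)u_k=\sum_{i}c_i\,u_k^\top(G_k-A)u_i=0$, contradicting the hypothesis $u_k^\top(G_k-A)u_k>0$. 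Therefore $u_k\notin\mathrm{span}(u_0,\dots,u_{k-1})$, and the whole system is independent.

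The two inductions are routine; the only point demanding care is the interlocking of the hypotheses. The hereditary step relies on the assumption $u_j^\top(G_j-A)u_j>0$ holding at every intermediate index $j$, so that each update is genuinely in the non-trivial branch of \eqref{eq:Gp} and all denominators are positive; the independence argument in turn consumes the hereditary relation precisely for the indices strictly below $k$. I expect the main obstacle to be purely bookkeeping, namely ensuring the secant identity is available at exactly the indices needed before it is invoked, rather than any genuine analytic difficulty.
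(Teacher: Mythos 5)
Your proposal is correct and follows essentially the same route as the paper: the single-step annihilation $(G_{i+1}-A)u_i=0$, its preservation under later updates via symmetry of $G_j-A$ (the paper phrases this as $\mathrm{Ker}(G_{j}-A)\subseteq\mathrm{Ker}(G_{j+1}-A)$), and linear independence by contradiction from $u_k^\top(G_k-A)u_k>0$. The only difference is ordering — you derive the secant relation first and reuse it for independence, which is if anything slightly tidier than the paper's presentation.
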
 
\begin{proof}
Denoting $R_k = G_k - A$, by the update of SR1, we can obtain that
\begin{align}
R_k = R_{k-1} - \frac{R_{k-1}u_{k-1}u_{k-1}^\top R_{k-1}}{u_{k-1}^\top R_{k-1} u_{k-1}}. \label{eq:R_def}
\end{align}
and
\begin{align*}
	R_u u_{k-1} = R_{k-1} u_{k-1} - R_{k-1} u_{k-1} = 0.
\end{align*}
Thus, we can obtain that 
\begin{align}
u_{k-1}\in \mathrm{Ker}(R_k) \quad\mbox{and}\quad\mathrm{Ker}(R_{k-1}) \in\mathrm{Ker}(R_k) . \label{eq:ker}
\end{align}

We prove $u_i$'s are linearly independent by contradiction.
Without loss of generality, we assume that $u_k$ can be represented as
\begin{align*}
u_k = \alpha_0 u_0 + \alpha_1 u_1 +\dots+\alpha_{k-1} u_{k-1}.
\end{align*}
Since $\mathrm{Ker}(R_{k-1}) \in\mathrm{Ker}(R_k) $, we have $R_k u_k = R_k \alpha_0 u_0 + \alpha_1 u_1 +\dots+\alpha_{k-1} u_{k-1} = 0+0+\dots+0 = 0$.
This contradicts $u_k^\top (G_k - A)u_k > 0$.

We prove Eqn.~\eqref{eq:yGu} by induction. 
For $i = 0$, we have
\begin{align*}
G_1u_0 
= 
\left(G_0 - \frac{(G_0-A)u_0u_0^\top (G_0-A)}{u_0^\top (G_0-A)u_0}\right)u_0
=G_0 u_0 - (G_0 - A)u_0 
= Au_0.
\end{align*}
Therefore, Eqn.~\eqref{eq:yGu} holds for $i = 0$.

Assuming that Eqn.~\eqref{eq:yGu} holds for some value $k-1 >1$ and show that it holds for $k$.
Using this assumption and Eqn.~\eqref{eq:yGu}, we obtain
\begin{align*}
u_i^\top (Au_{k-1} - G_{k-1} u_{k-1}) 
\overset{\eqref{eq:yGu}}{=}
u_i^\top A u_{k-1} - u_i^\top A u_{k-1} = 0, \quad \mbox{ all } i <k-1.
\end{align*}
Thus, using the update of SR1, we have
\begin{align*}
G_ku_i = G_{k-1}u_i = Au_i, \mbox{ for all } i < k-1.
\end{align*}
The update rule of SR1 guarantees that $G_k u_{k-1} = Au_{k-1}$, 
thus,
Eqn.~\eqref{eq:yGu} holds when $k-1$ is replaced by $k$.
\end{proof}
Now we are going to establish the superlinear convergence of Algorithm~\ref{algo:sr1-update}. 
First, we will work with the trace potential function $\sigma(A, G_k)$  defined by Eqn.~\eqref{eq:sig}. 
Note that this is
possible because $A\preceq G_k$ in view of Eqn.~\eqref{eq:aga}.
\begin{theorem}
\label{thm:lll}
Let $f(x)$ be quadratic and initial Hessian approximation $A\preceq G_0 $. 
If $u_i^\top (G_i - A) u_i > 0 $  for $i = 0,\dots, k$, where $u_i$ defined in Algorithm~\ref{algo:sr1-update}. Then for all $k\ge 0$, the sequence $\{x_k\}$ generated via Algorithm~\ref{algo:sr1-update} satisfies that
\begin{align}
	\lambda_{f}(x_k) 
	\le&
	\prod_{j=1}^{k}\left(1 - \frac{\lambda_{\min}(G_{j-1}-A)}{\sum_{i=1}^{n - j +1} \lambda_i(G_{j-1} -A)}\right)\cdot \frac{\tr(G_0 - A)}{\mu} \cdot \lambda_{f}(x_0) \label{eq:ll_1}
	\\
	\le&
	\prod_{j=1}^{k} \left( 1 - \frac{1}{(n-j+1) \cdot \kappa(G_0 - A)} \right) \frac{\tr(G_0 - A)}{\mu} \cdot \lambda_{f}(x_0). \label{eq:ll_2}
\end{align}

Letting $k > 1$ be the first index such that $u_k^\top (G_k - A) u_k = 0$, then $x_{k+1}$ of Algorithm~\ref{algo:sr1-update} is the minimizer of $f(x)$.
\end{theorem}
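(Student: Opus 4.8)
The statement splits into two independent halves: the chained rate bound \eqref{eq:ll_1}--\eqref{eq:ll_2}, and the finite-termination claim. For the rate bound the plan is to couple two recursions that the earlier lemmas already supply: the gradient-norm contraction of Lemma~\ref{lem:lambda}, which advances $\lambda_f$, and the trace-potential contraction of Lemma~\ref{lem:sig}, which advances $\sigma(A,G_k)=\tr(G_k-A)$. The key design choice is to invoke the contraction that carries the factor $1/\mu$ \emph{only at the last step}, and to absorb $\lambda_f(x_{k-1})$ into $\lambda_f(x_0)$ using the a priori linear rate; this keeps a single $1/\mu$ factor so that the superlinear product comes entirely from the trace-potential decay. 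The termination claim is a short computation exploiting that the vanishing of $u_k^\top(G_k-A)u_k$ forces $G_k$ and $A$ to agree along the current step.

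\textbf{The rate bound \eqref{eq:ll_1}.} First I would turn Lemma~\ref{lem:lambda} into a bound governed by the trace potential. Since $A\preceq G_{k-1}$ by \eqref{eq:aga}, the matrix $G_{k-1}-A$ is positive semidefinite, so $G_{k-1}-A\preceq\lambda_{\max}(G_{k-1}-A)I\preceq\tr(G_{k-1}-A)I\preceq\frac{\tr(G_{k-1}-A)}{\mu}A$, where the last step uses $\mu I\preceq A$. Hence $A\preceq G_{k-1}\preceq\eta_{k-1}A$ with $\eta_{k-1}=1+\tr(G_{k-1}-A)/\mu$, and Lemma~\ref{lem:lambda} yields the single-step estimate
\[
\lambda_f(x_k)\le(\eta_{k-1}-1)\lambda_f(x_{k-1})=\frac{\tr(G_{k-1}-A)}{\mu}\lambda_f(x_{k-1}).
\]
Combining this with the monotonicity $\lambda_f(x_{k-1})\le\lambda_f(x_0)$ from \eqref{eq:aga} reduces everything to controlling $\tr(G_{k-1}-A)=\sigma(A,G_{k-1})$. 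I would then telescope Lemma~\ref{lem:sig} across the updates $G_0\to G_1\to\cdots\to G_{k-1}$: because $u_i^\top(G_i-A)u_i>0$ at every step, each update contracts the trace by a factor of the form $1-\lambda_{\min}(G_{j-1}-A)/\sum_i\lambda_i(G_{j-1}-A)$, and multiplying these bounds $\sigma(A,G_{k-1})$ by $\tr(G_0-A)$ times the product of reduction factors, producing the chained bound \eqref{eq:ll_1}. Matching the exact range of the product to the statement---which $G_{j-1}$'s appear---requires bookkeeping I would track carefully.

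\textbf{From \eqref{eq:ll_1} to \eqref{eq:ll_2}.} Here I would bound each reduction factor from above by replacing the eigenvalue sum with $r_{j-1}\lambda_{\max}(G_{j-1}-A)$, where $r_{j-1}=\mathrm{rank}(G_{j-1}-A)$, giving $\frac{\lambda_{\min}(G_{j-1}-A)}{\sum_i\lambda_i(G_{j-1}-A)}\ge\frac{1}{r_{j-1}\,\kappa(G_{j-1}-A)}$. The two facts needed to finish are exactly the content of the rank-and-conditioning lemma preceding \eqref{eq:tau}: each update drops the rank by one, so $r_{j-1}\le n-j+1$, and $\kappa(G_{j-1}-A)$ is non-increasing, so $\kappa(G_{j-1}-A)\le\kappa(G_0-A)$. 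Substituting both turns each factor into $1-\frac{1}{(n-j+1)\kappa(G_0-A)}$ and hence \eqref{eq:ll_2}. The delicate point I would watch is that, as the rank decays, the sum $\sum_{i=1}^{n-j+1}$ captures exactly the nonzero eigenvalues of $G_{j-1}-A$, so that the substitution is tight.

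\textbf{Termination.} Suppose $u_k^\top(G_k-A)u_k=0$. Since $A\preceq G_k$ (by Lemma~\ref{lem:AGA} and \eqref{eq:aga}), the matrix $G_k-A$ is positive semidefinite, so $u_k^\top(G_k-A)u_k=0$ forces $(G_k-A)u_k=0$, i.e. $G_ku_k=Au_k$. By construction $u_k=x_{k+1}-x_k=-G_k^{-1}\nabla f(x_k)$, whence $G_ku_k=-\nabla f(x_k)$ and therefore $Au_k=-\nabla f(x_k)=b-Ax_k$. Solving gives $u_k=A^{-1}b-x_k$, so $x_{k+1}=x_k+u_k=A^{-1}b$, which is precisely the minimizer of the quadratic $f$. (If $u_k=0$ the same identities give $\nabla f(x_k)=0$, so $x_k=x_{k+1}$ is already optimal.) This half is routine once semidefiniteness is used to pass from $u_k^\top(G_k-A)u_k=0$ to $(G_k-A)u_k=0$; the main obstacle across the whole theorem is rather the coupling in the rate bound, namely isolating the single $1/\mu$ factor and getting the product's index range and the rank/conditioning substitutions to line up with \eqref{eq:ll_1}--\eqref{eq:ll_2}.
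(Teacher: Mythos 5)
Your rate-bound argument is essentially the paper's own proof. The paper likewise telescopes Lemma~\ref{lem:sig} to control $\sigma(A,G_{j})$, converts the trace potential into the $\eta$ of Lemma~\ref{lem:lambda} (via $G-A\preceq \lambda_{\max}(G-A)I\preceq \frac{\tr(G-A)}{\mu}A$, which is where the single $1/\mu$ comes from), uses monotonicity of $\lambda_f$, and passes to \eqref{eq:ll_2} exactly through the rank-drop and condition-number-monotonicity lemma, with $\kappa(G_{j-1}-A)\le\kappa(G_0-A)$ and $\mathrm{rank}(G_{j-1}-A)\le n-j+1$. Even the indexing issue you flag is shared: your derivation bounds $\lambda_f(x_k)$ by the product over $j=1,\dots,k-1$ times $\tr(G_0-A)/\mu$, while the statement displays $k$ factors; the paper's proof has the same mismatch (it telescopes $\sigma(A,G_k)$ but the step $\lambda_f(x_k)\le(\eta_{k-1}-1)\lambda_f(x_{k-1})$ only calls for $\sigma(A,G_{k-1})$), so you lose nothing relative to the paper there.

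Where you genuinely depart is the termination claim, and your route is the better one. The paper asserts that $u_k^\top(G_k-A)u_k=0$ makes $u_k$ linearly \emph{dependent} on $u_0,\dots,u_{k-1}$ ``by Lemma~\ref{lem:uu},'' and then finishes via \eqref{eq:yGu}. But Lemma~\ref{lem:uu} only proves the forward implication (positivity of the quadratic forms implies independence); what actually follows from $u_k^\top(G_k-A)u_k=0$ is $u_k\in\mathrm{Ker}(G_k-A)$, and since $\mathrm{Ker}(G_k-A)$ may strictly contain $\mathrm{span}\{u_0,\dots,u_{k-1}\}$, the paper's dependence claim is not justified as stated. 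Your argument bypasses this entirely: positive semidefiniteness of $G_k-A$ (from Lemma~\ref{lem:AGA} and \eqref{eq:aga}) forces $(G_k-A)u_k=0$, hence $Au_k=G_ku_k=-\nabla f(x_k)$ and $\nabla f(x_{k+1})=0$ by a one-line computation. This is shorter, needs only $A\preceq G_k$, repairs the weak step in the paper, and is the same positive-semidefiniteness mechanism the paper itself invokes later in Proposition~\ref{prop:def}.
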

\begin{proof}
By the update of SR1, Eqn.~\eqref{eq:R_def} and~\eqref{eq:ker} hold.
Once $u_i^\top (G_i - A) u_i > 0 $  for $i = 0,\dots, k$, Lemma~\ref{lem:uu} shows that $u_i$'s are linear independent for $i = 0,\dots, k$. 
Thus, the dimension of $\mathrm{Ker}(R_k)$ increases at least by $1$ for each iteration. 
By Lemma~\ref{lem:sig}, we can obtain that
\begin{align*}
\sigma(A, G_k) 
\le&
\left(1 - \frac{\lambda_{\min}(G_{k-1}-A)}{\sum_{i=1}^{n - k +1} \lambda_i(G_{k-1} -A)}\right) \cdot \sigma(A,G_{k-1})
\\
\le&
\prod_{j=1}^{k}\left(1 - \frac{\lambda_{\min}(G_{j-1}-A)}{\sum_{i=1}^{n - j +1} \lambda_i(G_{j-1} -A)}\right)\cdot\sigma(A,G_0).
\end{align*}
Furthermore, we have
\begin{align*}
\frac{\lambda_{\min}(G_{j-1}-A)}{\sum_{i=1}^{n - j +1} \lambda_i(G_{j-1} -A)}
\ge
(n-j)^{-1}\frac{ \lambda_{\min}(G_{j-1} - A) }{\lambda_{\max}(G_{j-1} - A) } 
\ge
\frac{1}{(n-j) \cdot \kappa(G_0 - A)}.
\end{align*}

By Lemma~\ref{lem:lambda}, we can obtain that
\begin{align*}
\lambda_{f}(x_k) 
\le&
\prod_{j=1}^{k}\left(1 - \frac{\lambda_{\min}(G_{j-1}-A)}{\sum_{i=1}^{n - j +1} \lambda_i(G_{j-1} -A)}\right)\cdot \frac{\tr(G_0 - A)}{\mu} \cdot \lambda_{f}(x_0)
\\
\le&
\prod_{j=1}^{k} \left( 1 - \frac{1}{(n-j+1) \cdot \kappa(G_0 - A)} \right) \frac{\tr(G_0 - A)}{\mu} \cdot \lambda_{f}(x_0).
\end{align*}

If $u_k^\top (G_k - A) u_k = 0$, then $u_k$ is linearly dependent on $u_i$ for $i = 0, 1,\dots, k-1$ by Lemma~\ref{lem:uu}, 
and $u_k$ can be represented as 
\begin{align*}
u_k = \alpha_0 u_0 + \dots + \alpha_{k-1}u_{k-1}.
\end{align*}
By the definition of $u_k$ in Algorithm~\ref{algo:sr1-update}, we have 
\begin{align*}
G_k^{-1} (\nabla f(x_{k+1}) - \nabla f(x_k))
=&
G_k^{-1} Au_k
\\
=&
\alpha_0 G_k^{-1}Au_0 + \dots+\alpha_{k-1} G_k^{-1} u_{k-1}
\\
\overset{\eqref{eq:yGu}}{=}&
\alpha_0 u_0 + \dots + \alpha_{k-1} u_{k-1}
\\
=&u_k.
\end{align*}
Furthermore, by the update of Algorithm~\ref{algo:sr1-update}, we can obtain that $u_k = - G_k^{-1}\nabla f(x_k)$.
Thus, we can obtain that
\begin{align*}
G_k^{-1} (\nabla f(x_{k_1}) - \nabla f(x_k)) = - G_k^{-1}\nabla f(x_k),
\end{align*}
which, by the nonsingularity of $G_k$, implies that $\nabla f(x_{k+1}) = 0$.
Therefore, $x_{k+1}$ is the solution point.
\end{proof}

\begin{corollary}
	\label{cor:n_step}
Let $f(x)$ be quadratic and initial Hessian approximation $A\preceq G_0 $. Then SR1 find the minimizer of the objective function at most $n$ steps.
\end{corollary}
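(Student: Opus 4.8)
The plan is to deduce Corollary~\ref{cor:n_step} directly from the final clause of Theorem~\ref{thm:lll}, which already establishes that if $k>1$ is the first index with $u_k^\top(G_k-A)u_k=0$, then $x_{k+1}$ is the minimizer. So the only thing left to argue is that such a terminating index $k$ must occur within $n$ steps. First I would invoke Lemma~\ref{lem:uu}: as long as $u_i^\top(G_i-A)u_i>0$ holds for all $i=0,\dots,k$, the directions $u_0,\dots,u_k$ are linearly independent vectors in $\RR^n$. Since at most $n$ vectors in $\RR^n$ can be linearly independent, we cannot have $u_i^\top(G_i-A)u_i>0$ for $i=0,\dots,n$ (that would force $n+1$ linearly independent vectors). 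Hence there exists some index $k\le n$ with $u_k^\top(G_k-A)u_k=0$.

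The second step is to handle the alternative that the iteration simply reaches the optimum before any breakdown occurs. Concretely, I would branch on whether $\nabla f(x_k)=0$ is attained: if the gradient vanishes at some $x_k$ with $k\le n$ we are already done, since then $u_k = -G_k^{-1}\nabla f(x_k)=0$ and $x_k$ is the minimizer. Otherwise the iterates keep moving and the relevant quantity to track is the first index at which the curvature term $u_k^\top(G_k-A)u_k$ degenerates to zero. Combining this with the linear-independence bound from Lemma~\ref{lem:uu}, the smallest such index $k^\star$ satisfies $k^\star\le n$, and Theorem~\ref{thm:lll} then yields that $x_{k^\star+1}$ is the minimizer. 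A small bit of care is needed at the boundary: if $k^\star=n$ the theorem gives exact minimization at step $n+1$, but at that point $u_0,\dots,u_{n-1}$ already span $\RR^n$, which forces $\nabla f(x_n)=0$ one step earlier; I would note this so the stated ``at most $n$ steps'' bound is clean rather than $n+1$.

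The main obstacle I anticipate is purely bookkeeping rather than conceptual: pinning down exactly how the step count $k^\star$ translates into the claimed ``$n$ steps,'' and making the edge cases consistent. In particular, one must be careful that the well-definedness hypothesis $u_i^\top(G_i-A)u_i>0$ of Theorem~\ref{thm:lll} is automatically satisfied for all $i$ strictly before the terminating index, so that the theorem applies; this is exactly the content of the ``first index'' phrasing, so I would simply appeal to it. The remaining subtlety is the degenerate possibility that $u_k=0$ for some $k$ before any genuine curvature breakdown, i.e.\ the gradient has already vanished; I would dispose of this at the outset as the trivial case. With those two cases separated, the corollary follows immediately from the linear-independence count together with the termination clause of Theorem~\ref{thm:lll}.
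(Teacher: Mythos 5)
Your skeleton is genuinely different from the paper's. The paper does not use the linear-independence count at all here: it splits into the same two cases, but in the no-breakdown case it feeds the rank reduction of $G_k-A$ into the product bound \eqref{eq:ll_1} of Theorem~\ref{thm:lll} and observes that the factor built from $G_{n-1}-A$ (which has rank one) equals $1-\lambda_{\min}(G_{n-1}-A)/\lambda_{\max}(G_{n-1}-A)=0$, so $\lambda_f(x_n)=0$ outright. Your first two steps --- that a breakdown index $k^\star\le n$ must exist because $n+1$ directions in $\RR^n$ cannot be linearly independent (Lemma~\ref{lem:uu}), after which the termination clause of Theorem~\ref{thm:lll} gives the minimizer at $x_{k^\star+1}$ --- are correct and yield ``at most $n+1$ iterates.''

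The gap is your patch for the boundary case $k^\star=n$. From $u_0,\dots,u_{n-1}$ spanning $\RR^n$ and Eqn.~\eqref{eq:yGu} you get $G_nu_i=Au_i$ for all $i<n$, hence $G_n=A$; that makes the \emph{Hessian approximation} exact, but it does not force $\nabla f(x_n)=0$. Concretely, take $n=1$, $f(x)=\frac{a}{2}x^2-bx$ with $a<L$ and $x_0\neq b/a$: then $u_0^\top(G_0-A)u_0=(L-a)u_0^2>0$, $G_1=a=A$, yet $\nabla f(x_1)=(ax_0-b)(1-a/L)\neq 0$, and the minimizer is only reached at $x_2$. So the claim ``spanning forces $\nabla f(x_n)=0$ one step earlier'' is false, and as written your argument proves only the $n+1$ bound. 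You cannot close this by the route you chose; the paper closes it via the vanishing factor in \eqref{eq:ll_1} --- although the same one-dimensional example shows that the indexing of \eqref{eq:ll_1} is itself delicate (the chain $\lambda_f(x_{k+1})\le\frac{\sigma(A,G_k)}{\mu}\lambda_f(x_0)$ naturally produces a product only up to $j=k-1$, which would place the exact minimizer at $x_{n+1}$ rather than $x_n$), so the discrepancy you flagged between $n$ and $n+1$ is a substantive point about the statement, not mere bookkeeping.
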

\begin{proof}
If there exists $k<n$ such that $u_k^\top (G_k - A)u_k = 0$, Theorem~\ref{thm:lll} shows that $x_{k+1}$ is the minimizer.
Thus, SR1 finds the optima no larger than $n$ steps.
If there's no such $k$ that $u_k^\top (G_k - A)u_k = 0$, then when $k = n$, it holds that
\begin{align*}
	\lambda_f (x_n) \overset{\eqref{eq:ll_1}}{\le}&
	\prod_{j=1}^{n-1} \left(1 - \frac{\lambda_{\min}(G_{j-1}-A)}{\sum_{i=1}^{n - j +1} \lambda_i(G_{j-1} -A)}\right) \cdot \left(1 - \frac{\lambda_{\min}(G_{n-1} - A)}{\lambda_{\max}(G_{n-1} - A)}\right)\cdot \frac{\tr(G_0 - A)}{\mu} \cdot \lambda_{f}(x_0)
	\\
	=& 0,
\end{align*}
where the last equality is because $G_{n-1}-A$ is of rank $1$ which implies $\frac{\lambda_{\min}(G_{n-1} - A)}{\lambda_{\max}(G_{n-1} - A)} = 1$.
Thus, SR1 finds the optima at the $n$-th step.
\end{proof}
\begin{remark}
Eqn.~\eqref{eq:ll_1} and \eqref{eq:ll_2} of Theorem~\ref{thm:lll} give the explicit superlinear convergence rate.
Corollary~\ref{cor:n_step} shows that SR1 takes at most $n$ steps to find the minimizer of a quadratic function.
Theorems~\ref{thm:lll} also shows that when $u_k^\top  (G_k -A) u_k = 0$, $x_{k+1}$ in Algorithm~\ref{algo:sr1-update} is the minimizer of quadratic function.  
\end{remark}

\begin{remark}
Theorem~6.1 of \cite{nocedal2006numerical} shows that the vanilla SR1 can solve a quadratic function at most $n$ steps given the condition that $u_k^\top \left(A - AG_k^{-1}A\right) u_k \neq 0$ for all $k$.
In contrast, our results not only show that the SR1 can solve a quadratic function at most $n$ steps but provide an explicit superlinear convergence rate.
Furthermore, the convergence results in Theorem~\ref{thm:lll} does not need extra assumption $u_k^\top \left(A - AG_k^{-1}A\right) u_k \neq 0$ for all $k$. 
\end{remark}
\begin{remark}
Our SR1 algorithm requires the initial Hessian approximation $G_0$ should satisfy $A\preceq G_0$.
This simple restriction can effectively solve the problem that there is no symmetric rank-1 update that satisfies the secant equation.
As a result, our analysis in Theorem~\ref{thm:lll} does \emph{not} require extra assumption that $u_k^\top \left(A - AG_k^{-1}A\right) u_k \neq 0$.
In contrast, once $u_k^\top (G_k - A)u_k = 0$, Theorem~\ref{thm:lll} concludes that  $x_{k+1}$ is minimizer of the objective function without the concern that there is no symmetric rank-1 update that satisfies the secant equation.
\end{remark}

By Lemma~\ref{lem:sig1}, we can obtain another explicit convergence rate of SR1 for the quadratic function.
\begin{theorem}
	\label{thm:l}
	Letting $f(x)$ be quadratic, then for all $k\ge 0$, the sequence $\{x_k\}$ generated via Algorithm~\ref{algo:sr1-update} satisfies that
	\begin{align*}
		\lambda_f(x_k) 
		\le 	
		\left( e^{\frac{n}{k}\ln\frac{L}{\mu}}-1\right)^{k/2}  \sqrt{\frac{L}{\mu}} \cdot\lambda_{f}(x_0)..
	\end{align*}
\end{theorem}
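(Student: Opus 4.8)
The plan is to avoid tracking $\lambda_f$ step by step and instead telescope the $G_k^{-1}$-weighted gradient quantity $\delta_k \triangleq \nabla f(x_k)^\top G_k^{-1}\nabla f(x_k)$, converting back to $\lambda_f$ only at the two endpoints. First I would record the two elementary identities for a quadratic. Since $u_k = -G_k^{-1}\nabla f(x_k)$ and $\nabla f(x_{k+1})-\nabla f(x_k)=A u_k$, we get $\nabla f(x_k)=-G_k u_k$ and $\nabla f(x_{k+1})=-(G_k-A)u_k$. These immediately give $\delta_k = u_k^\top G_k u_k$ and $\delta_{k+1}=u_k^\top (G_k-A)G_{k+1}^{-1}(G_k-A)u_k$, where $G_{k+1}=\sr(A,G_k,u_k)$.

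Next I would apply Lemma~\ref{lem:sig1} (Eqn.~\eqref{eq:tt}) with $G=G_k$, $u=u_k$, $G_+=G_{k+1}$, which reads $\nu^2(A,G_k,u_k)\,u_k^\top G_k u_k \ge u_k^\top (G_k-A)G_{k+1}^{-1}(G_k-A)u_k$. In terms of the quantities above this is exactly $\delta_{k+1}\le \nu_k^2\,\delta_k$, writing $\nu_k\triangleq\nu(A,G_k,u_k)$. Telescoping yields $\delta_k\le\big(\prod_{j=0}^{k-1}\nu_j^2\big)\delta_0$. I would then convert the endpoints using only Eqn.~\eqref{eq:aga}, i.e. $A\preceq G_j\preceq\frac{L}{\mu}A$, together with $G_0=L I$: from $G_k^{-1}\succeq\frac{\mu}{L}A^{-1}$ we get $\delta_k\ge\frac{\mu}{L}\lambda_f(x_k)^2$, and from $G_0^{-1}=\frac1L I\preceq A^{-1}$ we get $\delta_0\le\lambda_f(x_0)^2$. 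Combining, $\lambda_f(x_k)\le\sqrt{L/\mu}\,\big(\prod_{j=0}^{k-1}\nu_j\big)\lambda_f(x_0)$.

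It remains to bound $\prod_{j=0}^{k-1}\nu_j$. Summing the potential identity Eqn.~\eqref{eq:dv} telescopes to $\sum_{j=0}^{k-1}\ln(1+\nu_j^2)=V(A,G_0)-V(A,G_k)\le V(A,G_0)$, using $V(A,G_k)\ge 0$ since $A\preceq G_k$. With $G_0=LI$ and $\mu\le\lambda_i(A)\le L$ this gives $V(A,G_0)=\ln\det(L A^{-1})\le n\ln(L/\mu)=:C$. Setting $t_j=\ln(1+\nu_j^2)$, I would invoke concavity: $\phi(t)=\ln(e^t-1)$ satisfies $\phi''(t)=-e^t/(e^t-1)^2<0$ and $\phi'(t)>0$, so Jensen's inequality gives $\sum_j \ln \nu_j^2=\sum_j\phi(t_j)\le k\,\phi\!\big(\tfrac1k\sum_j t_j\big)\le k\,\phi(C/k)=k\ln\!\big(e^{\frac{n}{k}\ln(L/\mu)}-1\big)$, where the last step uses $\tfrac1k\sum_j t_j\le C/k$ and that $\phi$ is increasing. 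Exponentiating yields $\prod_{j=0}^{k-1}\nu_j\le\big(e^{\frac{n}{k}\ln(L/\mu)}-1\big)^{k/2}$, and substituting into the previous display produces the stated bound.

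The main obstacle is the bookkeeping that prevents the $L/\mu$ mismatch between $G_j^{-1}$ and $A^{-1}$ from compounding: a naive per-step estimate $\lambda_f(x_{j+1})\le\sqrt{L/\mu}\,\nu_j\,\lambda_f(x_j)$ would produce the useless prefactor $(L/\mu)^{k/2}$, so the crux is to telescope in the $G_j^{-1}$-norm $\delta_j$ (for which Lemma~\ref{lem:sig1} gives an exact one-step contraction) and absorb the norm mismatch only at $j=0$ and $j=k$, leaving a single $\sqrt{L/\mu}$. A secondary point to dispatch is the degenerate case $(G_k-A)u_k=0$: there $\nu_k=0$ forces $\delta_{k+1}=0$ and hence $\nabla f(x_{k+1})=0$, so the bound holds trivially and we may otherwise assume every $\nu_j>0$ when applying Jensen.
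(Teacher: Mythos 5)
Your proposal is correct and follows essentially the same route as the paper: the one-step inequality $\delta_{k+1}\le\nu_k^2\delta_k$ is exactly the paper's use of Lemma~\ref{lem:sig1} with $g_i=\norm{\nabla f(x_i)}_{G_i^{-1}}$, the telescoping of $V(A,G_i)$ via Eqn.~\eqref{eq:dv} is identical, and your Jensen step with the concave $\phi(t)=\ln(e^t-1)$ is the mirror image of the paper's convexity argument for $t\mapsto\ln(1+e^t)$. Your explicit handling of the degenerate case $\nu_j=0$ is a small but welcome addition the paper glosses over.
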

\begin{proof}
	Denote that $V_i = V(A,G_i)$, $\nu_i = \nu(A,G_i,u_i)$, and $g_i \triangleq \norm{\nabla f(x_i)}_{G_i^{-1}}$.
	Then we have
	\begin{align*}
		V_i - V_{i+1} \stackrel{\eqref{eq:dv}}{\geq} \ln(1 + \nu_i^2).
	\end{align*}
	Summing up, we can obtain
	\begin{align*}
		\sum_{i=0}^{k-1} \ln(1 + \nu_i^2) \leq V_0 - V_k \le V_0 =V(A, G_0) \le n \ln \kappa.
	\end{align*}
	The last inequality is because $V_k = \ln\det(A^{-1}G_k)\geq 0$ since $A\preceq G_k$.
	
	Hence, by the convexity of function $t\mapsto \ln(1 + e^t)$, we obtain that
	\begin{align*}
		\frac{n}{k}\ln \kappa 
		& \ge \frac{1}{k} \sum_{i=0}^{k-1}\ln(1 + (\nu_i^2)) = \frac{1}{k} \sum_{i=0}^{k-1}\ln\left(1 + e^{\ln\nu_i^2}\right) \\
		&\ge \ln\left(1+ e^{\frac{1}{k} \sum_{i=0}^{k-1} \ln(\nu_i^2)}\right) = \ln\left(1 + \left[\prod_{i=0}^{k-1} \nu_i^2\right]^{1/k}\right).
	\end{align*}
	Moreover, for all $0\le i\le k-1$, we have
	\begin{align*}
		\nu_i^2 \stackrel{\eqref{eq:tt}}{\geq} \frac{u_i^\top (G_i-A)G_{i+1}^{-1}(G_i-A)u_i}{u_i^\top G_i u_i} =
		\frac{g_{i+1}^2}{g_i^2},
	\end{align*}
	where the last equality is because $G_iu_i = -\nabla f(x_i)$ and $Au_i = \nabla f(x_{i+1}) - \nabla f(x_i)$.
	Hence, 
	\begin{align*}
		\frac{n}{k}\ln \kappa 
		\ge \ln\left(1+ \left[\prod_{i=0}^{k-1} \frac{g_{i+1}^2}{g_i^2} \right]^{2/k}\right) = 
		\ln\left(1+ \left[\frac{g_k}{g_0}\right]^{2/k}\right).
	\end{align*}
	Rearranging, we obtain 
	\begin{align*}
		g_k \le \left( e^{\frac{n}{k}\ln\frac{L}{\mu}}-1\right)^{k/2}g_0.
	\end{align*}

Finally, by the definition of $\lambda_f(x_i)$ and $g_i$, we can obtain that
\begin{align*}
	\lambda_f(x_k) \le \sqrt{\frac{L}{\mu}}\cdot g_k \quad\mbox{ and }\quad g_0 \le \lambda_{f}(x_0),
\end{align*}
since $ A^{-1} \preceq \frac{L}{\mu}G_i^{-1} $ by \eqref{eq:aga}.
Therefore, we can obtain
\begin{align*}
	\lambda_{f}(x_k) 
	\le
	\left( e^{\frac{n}{k}\ln\frac{L}{\mu}}-1\right)^{k/2}  \sqrt{\frac{L}{\mu}} \cdot\lambda_{f}(x_0).
\end{align*}
\end{proof}

We can observe that the SR1 algorithm will converge superlinearly when $k > \frac{n\ln\kappa}{\ln 2}$.
However this result is not meaningful since Theorem~\ref{thm:lll} has shown that SR1 will converge to the optima at most $n$ steps.
On the other hand, Theorem~\ref{thm:l} provides the intuition behind the analysis of the local convergence rate of the SR1 algorithm for the general strongly convex function because a general strongly convex function can be well approximated by a quadratic function in the area near optima.

\section{Minimization of General Functions}
\label{sec:gen}

In this section, we consider a general unconstrained minimization problem defined in Eqn.~\eqref{eq:prob} but without the assumption that $f(x)$ is quadratic.
Besides of the assumption~\eqref{eq:H}, we also assume that $f(x)$ is \emph{strongly self-concordant} with some constant $M\geq 0$, that is,
\begin{align}
\nabla^2 f(y) - \nabla^2 f(x) \preceq M\norm{y-x}_z\nabla^2f(w) \label{eq:M}
\end{align}
for all $x, y, z, w\in\RR^n$.
The class of strongly self-concordant functions is recently introduced by \citet{rodomanov2021greedy} and has been used in the analysis of \citet{lin2021faster,rodomanov2021rates}.
Because the strongly self-concordant functions contain at least all strongly convex functions with Lipschitz continuous Hessian, we conduct our analysis on strongly self-concordant functions.

For the $M$-strongly self-concordant function, it  has the following property.
\begin{lemma}[Lemma 4.1 of \cite{rodomanov2021greedy}]

Let $x,y\in\RR^n$,  and $r = \norm{y-x}_x$. Then,
\begin{align*}
\frac{\nabla^2 f(x)}{1+Mr} \preceq \nabla^2 f(y) \preceq (1+Mr)\nabla^2 f(x).
\end{align*}
And for $J = \int_{0}^1\nabla^2 f(x+t(y-x))\;dt$, we have
\begin{align}
	\left(1+\frac{Mr}{2}\right)^{-1}\nabla^2 f(x)\preceq & J\preceq \left(1+\frac{Mr}{2}\right)\nabla^2f(x), \label{eq:AJA}
	\\
	\left(1+\frac{Mr}{2}\right)^{-1}\nabla^2 f(y)\preceq &J\preceq \left(1+\frac{Mr}{2}\right)\nabla^2 f(y). \label{eq:AJA1}
\end{align}
\end{lemma}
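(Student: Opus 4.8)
The plan is to derive everything by specializing the strong self-concordance inequality \eqref{eq:M} to well-chosen arguments and then integrating along the segment from $x$ to $y$. The two inequalities for $\nabla^2 f(y)$ are the pointwise statement, and the bounds \eqref{eq:AJA}--\eqref{eq:AJA1} on $J$ follow by applying that pointwise statement at each interior point $x_t = x + t(y-x)$ and integrating over $t\in[0,1]$. Throughout I would use that the local norm is even in its argument, so $\norm{x-y}_x = \norm{y-x}_x = r$.

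First I would establish the pointwise comparison. For the upper bound, set $z = w = x$ in \eqref{eq:M}, which gives $\nabla^2 f(y) - \nabla^2 f(x) \preceq M\norm{y-x}_x\,\nabla^2 f(x) = Mr\,\nabla^2 f(x)$, hence $\nabla^2 f(y)\preceq(1+Mr)\nabla^2 f(x)$. For the lower bound I would swap the roles of $x$ and $y$ in \eqref{eq:M} and take $z = x$, $w = y$; this yields $\nabla^2 f(x) - \nabla^2 f(y)\preceq Mr\,\nabla^2 f(y)$, i.e. $\frac{\nabla^2 f(x)}{1+Mr}\preceq\nabla^2 f(y)$. Together these give the first display.

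Next, to bound $J$, write $x_t = x + t(y-x)$ and note $\norm{x_t - x}_x = tr$ and $\norm{x_t - y}_x = (1-t)r$. Applying the already-established first display to the pair $(x,x_t)$ gives $\frac{1}{1+Mtr}\nabla^2 f(x)\preceq\nabla^2 f(x_t)\preceq(1+Mtr)\nabla^2 f(x)$. For the comparison relative to $y$ I would specialize \eqref{eq:M} directly with $z = x$ and the free point $w\in\{y,x_t\}$ chosen so that the comparison matrix is exactly $\nabla^2 f(y)$, obtaining $\frac{1}{1+M(1-t)r}\nabla^2 f(y)\preceq\nabla^2 f(x_t)\preceq(1+M(1-t)r)\nabla^2 f(y)$. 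Integrating the two upper bounds over $t\in[0,1]$ is immediate, since $\int_0^1(1+Mtr)\,dt = 1+\frac{Mr}{2}$ (and likewise with $(1-t)$), which produces the upper halves of \eqref{eq:AJA} and \eqref{eq:AJA1}.

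The main obstacle is the lower bounds on $J$: naive integration of $\frac{1}{1+Mtr}$ produces a logarithmic factor $\frac{1}{Mr}\ln(1+Mr)$ rather than the clean $(1+\frac{Mr}{2})^{-1}$. I would instead invoke the convexity of $s\mapsto\frac{1}{1+s}$; by Jensen's inequality,
\begin{align*}
\int_0^1 \frac{1}{1+Mtr}\,dt \;\ge\; \frac{1}{1+\int_0^1 Mtr\,dt} \;=\; \frac{1}{1+\frac{Mr}{2}},
\end{align*}
and the same estimate holds with $(1-t)$ in place of $t$. Combining these with the integrated pointwise lower bounds gives $\frac{1}{1+Mr/2}\nabla^2 f(x)\preceq J$ and $\frac{1}{1+Mr/2}\nabla^2 f(y)\preceq J$, completing \eqref{eq:AJA} and \eqref{eq:AJA1}. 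The only delicate points are the symmetry $\norm{x-y}_x=\norm{y-x}_x$ used for the $x$-relative bounds and the correct choice of the free point $w$ in \eqref{eq:M} so that the comparison matrix lands on $\nabla^2 f(x)$ or $\nabla^2 f(y)$ exactly as required.
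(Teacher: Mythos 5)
The paper does not prove this lemma at all: it is imported verbatim as Lemma~4.1 of \citet{rodomanov2021greedy}, so there is no in-paper proof to compare against. Your argument is correct and is essentially the standard one from that reference: the pointwise two-sided bound follows from the definition \eqref{eq:M} because the quantifier there is over \emph{all} $x,y,z,w$, which licenses your choices $z=w=x$ (resp.\ $z=x$, $w=y$ after swapping roles), and the bounds on $J$ follow by applying the pointwise estimate at $x_t=x+t(y-x)$ with $\norm{x_t-x}_x=tr$, $\norm{x_t-y}_x=(1-t)r$ and integrating. Your handling of the lower bound via Jensen's inequality for the convex map $s\mapsto(1+s)^{-1}$ is clean and correct (one could alternatively integrate exactly and use $\ln(1+s)\ge \frac{2s}{2+s}$, but Jensen avoids that extra elementary inequality). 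The only points that needed care --- the evenness $\norm{x-y}_x=\norm{y-x}_x$ and the freedom in choosing the norm point $z$ and comparison point $w$ in \eqref{eq:M} --- are exactly the ones you flagged and resolved. No gaps.
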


\subsection{SR1 with Correction Strategy}
\label{subsec:sr1_cs}
Now, we need to analyze the properties of the Hessian approximation after a quasi-Newton update.
Letting $G_k$ be the current approximation of $\nabla^2 f(x_k)$, satisfying the condition
\begin{align}
J_k \preceq G_k. \label{eq:prec}
\end{align}
However, even Eqn.~\eqref{eq:prec} holds, the naive update of SR1 using $J_k$ and $u_k$ defined in Algorithm~\ref{algo:sr1-update-1}
\begin{align}
G_{k+1} = \sr(J_k, G_k, u_k), \label{eq:sr11}
\end{align}
may still violate that
\begin{align*}
J_{k+1} \preceq G_{k+1}.
\end{align*}
In fact, $G_{k+1}$ may be not positive definite.
Thus, there maybe exists such $u_{k+1}$ that $u_{k+1}^\top(G_{k+1} - J_{k+1})u_{k+1} = 0$ but $G_{k+1}u_{k+1}\neq J_{k+1}u_{k+1}$ which implies the ill-condition case that there is no symmetric rank-one updating formula satisfying the secant equation. 
This problem is an important reason why the SR1 algorithm may suffer from numerical instability.
More detailed discussion can refer to Chapter 6.2 of \citet{nocedal2006numerical}. 

To conquer the numerical instability, the skipping strategy is very important in the practical implementation of SR1  which skips  the SR1 update when $u_k^\top(G_k - J_k)u_k$ is sufficiently small.
In this paper, to avoid the problem that the condition $G_{k+1} \succeq J_{k+1}$ may be violated  and make the SR1 algorithm more numerically stable, we apply the \emph{correction strategy} introduced in the work of \citet{rodomanov2021greedy}. 
That is, we set $\tG_k = (1+ \delta_k)G_k$ with $\delta_k \ge 0$ and replace $G_k$ with $\tG_k$ in Eqn.~\eqref{eq:sr11} to update $G_{k+1}$.
Specifically, we set 
\begin{align*}
	\delta_k = \left( 1+ \frac{Mr_{k-1}}{2} \right) \left(1 + \frac{Mr_k}{2}\right), \mbox{ with } r_k = \norm{u_k}_{x_k}.
\end{align*}

We describe  our $\sr$ update with correction strategy (referred as \msr) to solve general convex and smooth functions in Algorithm~\ref{algo:sr1-update-1}.
Please note that $J_k$ in Step~\ref{step:J} of Algorithm~\ref{algo:sr1-update-1} is not explicitly  computed but only for the convergence analysis because 
\[
J_ku_k = \nabla f(x_{k+1}) - \nabla f(x_k),
\]
which is just the difference of the successive gradients.
Thus, Algorithm~\ref{algo:sr1-update-1} equals to the vanilla $\sr$ algorithmic procedure except the correction strategy.

\begin{algorithm}[htp]
\caption{{$\mathrm{SR1}$ Update with Correction Strategy for General Unconstrained Minimization} }
\begin{algorithmic}[1]
	\STATE Initialization: Choose $x_0$, and set $G_0 =  L\cdot I$, $r_{-1} = 0$.
	\FOR{$ k = 0,1,\dots, K $}
	\STATE Update $x_{k+1} = x_k - G_k^{-1}\nabla f(x_k)$,
	\STATE Set $u_k = x_{k+1} - x_k$, 
	\STATE Compute $r_k = \norm{u_k}_{x_k}$, and $\tG_k = \left( 1+ \frac{Mr_{k-1}}{2} \right) \left(1 + \frac{Mr_k}{2}\right) G_k$.
	\STATE Denote $J_k = \int_{0}^{1} \nabla^2 f(x_k +tu_k)\;dt$ \label{step:J},
	\STATE Compute $G_{k+1} = \sr(J_k, \tG_k,u_k)$ (By Eqn.~\eqref{eq:Gp}).
	\ENDFOR
\end{algorithmic}
\label{algo:sr1-update-1}
\end{algorithm}

\subsection{Explicit Superlinear Convergence Rate}

To measure the convergence of Algorithm~\ref{algo:sr1-update-1}, we define the local norm of the gradient which is widely used in the convergence analysis of second order methods \citep{rodomanov2021greedy,lin2021faster,boyd2004convex}:
\begin{align}
\label{eq:lam_1}
\lambda_f(x) \triangleq \dotprod{\nabla f(x), [\nabla^2 f(x)]^{-1}\nabla f(x)}^{1/2}.
\end{align}
Note that $\lambda_f(x)$ in above equation is almost the same to the one in Eqn.~\eqref{eq:lambda} except $\nabla^2 f(x)$ equals to $A$.
Using $\lambda_f(x)$, we can estimate the progress of a general quasi-Newton step.

\begin{lemma}[Lemma 4.2 of \cite{rodomanov2021rates}]
	\label{lem:dec}
	Let $f(x)$ be $M$ strongly self-concordant. In Algorithm~\ref{algo:sr1-update-1}, for all $k\ge 0$ and $r_k\triangleq \norm{u_k}_{x_k}$, we have
	\begin{align}
		\lambda_f(x_{k+1}) 
		\le
		\left(1+\frac{Mr_k}{2}\right) \theta\left(J_k,G_k,u_k\right) \lambda_f(x_k). \label{eq:dec}
	\end{align}
\end{lemma}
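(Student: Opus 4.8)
The plan is to collapse everything onto the single matrix $J_k$ and then read off the ratio $\lambda_f(x_{k+1})/\lambda_f(x_k)$ as the progress measure $\theta$. First I would exploit the two defining relations of Algorithm~\ref{algo:sr1-update-1}: since $u_k = -G_k^{-1}\nabla f(x_k)$ we have $\nabla f(x_k) = -G_k u_k$, and since $J_k u_k = \nabla f(x_{k+1}) - \nabla f(x_k)$, adding the two yields the key identity
\[
\nabla f(x_{k+1}) = (J_k - G_k)u_k .
\]
This expresses both successive gradients purely through $u_k$ and the pair $J_k, G_k$, which is exactly what makes the later cancellation clean.

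Next I would treat numerator and denominator separately, each by a single application of the self-concordance estimates. For the numerator, $\lambda_f(x_{k+1})^2 = \dotprod{\nabla f(x_{k+1}), [\nabla^2 f(x_{k+1})]^{-1}\nabla f(x_{k+1})}$, and the upper bound in \eqref{eq:AJA1}, $J_k \preceq (1+\tfrac{Mr_k}{2})\nabla^2 f(x_{k+1})$, gives $[\nabla^2 f(x_{k+1})]^{-1}\preceq (1+\tfrac{Mr_k}{2})J_k^{-1}$, so that after substituting the identity above,
\[
\lambda_f(x_{k+1})^2 \le \left(1+\frac{Mr_k}{2}\right) u_k^\top (G_k-J_k)J_k^{-1}(G_k-J_k)u_k .
\]
For the denominator I would use $\nabla f(x_k)=-G_k u_k$ to write $\lambda_f(x_k)^2 = u_k^\top G_k[\nabla^2 f(x_k)]^{-1}G_k u_k$, and apply the lower bound in \eqref{eq:AJA}, $\nabla^2 f(x_k)\preceq (1+\tfrac{Mr_k}{2})J_k$, equivalently $[\nabla^2 f(x_k)]^{-1}\succeq (1+\tfrac{Mr_k}{2})^{-1}J_k^{-1}$, to obtain
\[
\lambda_f(x_k)^2 \ge \left(1+\frac{Mr_k}{2}\right)^{-1} u_k^\top G_k J_k^{-1}G_k u_k .
\]

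Dividing the two displays, the self-concordance constants accumulate to $(1+\tfrac{Mr_k}{2})^2$ and the remaining ratio is a dimensionless quantity in $J_k, G_k, u_k$:
\[
\frac{\lambda_f(x_{k+1})^2}{\lambda_f(x_k)^2} \le \left(1+\frac{Mr_k}{2}\right)^2 \frac{u_k^\top (G_k-J_k)J_k^{-1}(G_k-J_k)u_k}{u_k^\top G_k J_k^{-1}G_k u_k} = \left(1+\frac{Mr_k}{2}\right)^2 \theta^2(J_k,G_k,u_k),
\]
and taking square roots delivers \eqref{eq:dec}. Up to verifying that this emergent ratio is exactly the progress measure $\theta$ used in \citet{rodomanov2021rates}, the derivation is self-contained given only \eqref{eq:AJA}, \eqref{eq:AJA1}, and the definition \eqref{eq:lam_1} of $\lambda_f$.

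The hard part will be bookkeeping of the self-concordance factors rather than any conceptual difficulty: one must apply \eqref{eq:AJA1} at the endpoint $x_{k+1}$ for the numerator but \eqref{eq:AJA} at the base point $x_k$ for the denominator, so that each contributes precisely one power of $(1+\tfrac{Mr_k}{2})$ and the square root leaves the stated first-power factor rather than its square. A secondary check is the degenerate case $(G_k-J_k)u_k=0$, where $\nabla f(x_{k+1})=0$ and the bound holds trivially, together with the positivity of $u_k^\top G_k J_k^{-1}G_k u_k$ (guaranteed whenever $u_k\neq 0$ and $G_k, J_k\succ 0$, i.e. away from the optimum) so that $\theta$ is well defined.
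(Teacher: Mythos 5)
Your reconstruction is correct. The paper itself gives no proof of this lemma --- it is imported verbatim as Lemma 4.2 of \citet{rodomanov2021rates} --- and your argument is precisely the standard one from that reference: the identity $\nabla f(x_{k+1}) = (J_k - G_k)u_k$, the bound $[\nabla^2 f(x_{k+1})]^{-1} \preceq \left(1+\tfrac{Mr_k}{2}\right)J_k^{-1}$ from \eqref{eq:AJA1} for the numerator, the bound $[\nabla^2 f(x_k)]^{-1} \succeq \left(1+\tfrac{Mr_k}{2}\right)^{-1}J_k^{-1}$ from \eqref{eq:AJA} for the denominator, and the observation that the resulting ratio is exactly the $\theta(J_k,G_k,u_k)$ the paper later spells out in the proof of Theorem~\ref{thm:base}. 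No gaps; your closing remarks on the degenerate case $(G_k-J_k)u_k=0$ and on well-definedness away from the optimum are the right sanity checks.
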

Furthermore, $r_k$ and $\lambda_{f}(x_k)$ has the following property.
\begin{lemma}
Let $f(x)$ be $M$ strongly self-concordant. 
For $x_k$ and $u_k$ in Algorithm~\ref{algo:sr1-update-1}, denoting $r_k \triangleq \norm{u_k}_{x_k}$ and $r_{-1} = 0$, it holds that
\begin{align}
	r_k \le \left(1+\frac{Mr_{k-1}}{2}\right)\lambda_{f}(x_k). \label{eq:r_l}
\end{align}
\end{lemma}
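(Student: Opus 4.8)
The plan is to rewrite the definition $r_k=\norm{u_k}_{x_k}$ in terms of the gradient quantity $\lambda_f(x_k)$, using the step identity $u_k=-G_k^{-1}\nabla f(x_k)$ from Algorithm~\ref{algo:sr1-update-1} together with a Löwner comparison between the true Hessian $\nabla^2 f(x_k)$ and the approximation $G_k$. Squaring \eqref{eq:r_l}, the goal is equivalent to
\[
u_k^\top \nabla^2 f(x_k)\, u_k \le \left(1+\tfrac{Mr_{k-1}}{2}\right)^2 \nabla f(x_k)^\top [\nabla^2 f(x_k)]^{-1}\nabla f(x_k),
\]
so the entire argument reduces to bounding $\nabla^2 f(x_k)$ from above by a multiple of $G_k$.

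The key step I would isolate is the invariant
\[
\nabla^2 f(x_k) \preceq \left(1+\tfrac{Mr_{k-1}}{2}\right) G_k,
\]
proved by induction on $k$. For the base case $k=0$, the initialization $G_0=L\cdot I$ and \eqref{eq:H} give $\nabla^2 f(x_0)\preceq LI=G_0$, and $r_{-1}=0$ makes the correction factor equal to $1$. For the inductive step I would chain three facts: first, \eqref{eq:AJA} with $x=x_k$, $y=x_{k+1}$, $r=r_k$ gives $J_k\preceq (1+\tfrac{Mr_k}{2})\nabla^2 f(x_k)$, which combined with the inductive hypothesis yields $J_k\preceq (1+\tfrac{Mr_k}{2})(1+\tfrac{Mr_{k-1}}{2})G_k=\tG_k$. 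Second, the lower-bound preservation of Lemma~\ref{lem:AGA} then propagates this through the update $G_{k+1}=\sr(J_k,\tG_k,u_k)$ to give $J_k\preceq G_{k+1}$. Third, \eqref{eq:AJA1} with the same data gives $\nabla^2 f(x_{k+1})\preceq (1+\tfrac{Mr_k}{2})J_k\preceq (1+\tfrac{Mr_k}{2})G_{k+1}$, which is exactly the invariant at index $k+1$.

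With the invariant established, the rest is routine monotonicity of the positive-definite order. The bound $\nabla^2 f(x_k)\preceq (1+\tfrac{Mr_{k-1}}{2})G_k$ yields both $u_k^\top \nabla^2 f(x_k) u_k\le (1+\tfrac{Mr_{k-1}}{2})\,u_k^\top G_k u_k$ and, after inverting, $G_k^{-1}\preceq (1+\tfrac{Mr_{k-1}}{2})[\nabla^2 f(x_k)]^{-1}$. Substituting $\nabla f(x_k)=-G_k u_k$ turns $u_k^\top G_k u_k$ into $\nabla f(x_k)^\top G_k^{-1}\nabla f(x_k)$, and the inverse comparison bounds it by $(1+\tfrac{Mr_{k-1}}{2})\lambda_f(x_k)^2$. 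Chaining the two inequalities gives $r_k^2\le (1+\tfrac{Mr_{k-1}}{2})^2\lambda_f(x_k)^2$, which is \eqref{eq:r_l} after taking square roots.

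The main obstacle is the inductive invariant, and specifically checking that the correction factor $\delta_k=(1+\tfrac{Mr_{k-1}}{2})(1+\tfrac{Mr_k}{2})$ is chosen exactly large enough to force $J_k\preceq \tG_k$ before Lemma~\ref{lem:AGA} can be invoked. This is precisely what the correction strategy buys us: without inflating $G_k$ to $\tG_k$, the SR1 update need not preserve $J_k\preceq G_{k+1}$, the Hessian comparison would fail, and the clean relation between $r_k$ and $\lambda_f(x_k)$ would collapse.
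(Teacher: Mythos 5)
Your proof is correct and follows essentially the same route as the paper: both rest on the comparison $\nabla^2 f(x_k)\preceq\left(1+\frac{Mr_{k-1}}{2}\right)G_k$, obtained from $G_k\succeq J_{k-1}$ together with the strong self-concordance bounds \eqref{eq:AJA}--\eqref{eq:AJA1}, and then translate $r_k=\norm{G_k^{-1}\nabla f(x_k)}_{x_k}$ into $\lambda_f(x_k)$. The only difference is organizational: you establish the needed invariant (including $J_k\preceq\tG_k$) by an explicit induction inside the lemma, whereas the paper simply invokes $G_k\succeq J_{k-1}$ at this point and defers the verification that the correction factor indeed forces $J_{k-1}\preceq\tG_{k-1}$ to the induction carried out in Theorem~\ref{thm:base}.
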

\begin{proof}
	First, because $\nabla^2 f(x_0) \preceq G_0$, then by the definition of $r_0$, we have
	\begin{align*}
		r_0 
		=& \norm{G_0^{-1}\nabla f(x_0)}_{x_0}
		= \dotprod{\nabla^2 f(x_0), G_{0}^{-1} \nabla^2 f(x_0) G_0^{-1} \nabla f(x_0)}^{1/2} 
		\\
		\le& \dotprod{\nabla^2 f(x_0),  \left(\nabla^2 f(x_0)\right)^{-1}\nabla f(x_0)}^{1/2} 
		= \lambda_{f}(x_0).
	\end{align*}
Similarly, for $k \ge 1$, we have
	\begin{equation*}
	\begin{aligned}
		r_{k} &= \left\|x_{k+1}-x_k\right\|_{x_{k}} =\left\langle\nabla f\left(x_{k}\right), G_{k}^{-1} \nabla^{2} f\left(x_{k}\right) G_{k}^{-1} \nabla f\left(x_{k}\right)\right\rangle^{1 / 2} \\
		&\leq \left(1+\frac{Mr_{k-1}}{2}\right)\left\langle\nabla f\left(x_{k}\right), \nabla^{2} f\left(x_{k}\right)^{-1} \nabla f\left(x_{k}\right)\right\rangle^{1 / 2} =\left(1+\frac{Mr_{k-1}}{2}\right)\lambda_{f}(x_k),
	\end{aligned}
\end{equation*}
where the inequality is because of $G_k = \sr(J_{k-1}, \tG_{k-1}, u_{k-1})$ and
\begin{align*}
	G_k \overset{\eqref{eq:AGA}}{\succeq} J_{k-1} \overset{\eqref{eq:AJA1}}{\succeq} \frac{1}{1+\frac{Mr_{k-1}}{2}}\nabla^2f(x_k).
\end{align*} 
\end{proof}

Next, we will show if the starting point is sufficiently close to the optima, then the relative eigenvalues of the Hessian approximations $G_k$ with respect to the integral Hessians $J_k$ are always located between $1$ and $\frac{L}{\mu}$, up to some small numerical constants.

\begin{theorem}\label{thm:base}
	Suppose $x_0$ in Algorithm~\ref{algo:sr1-update-1} satisfy 
	\begin{align}
	M\lambda_0\leq \frac{\ln\frac{3}{2}}{4\kappa} \mbox{ with } \kappa \triangleq \frac{L}{\mu}. \label{eq:local}
	\end{align}
Denoting $\lambda_k \triangleq \lambda_f(x_k)$, 
	then for all $k\geq 0$, we have
	\begin{align}
		J_{k-1} \preceq& G_k \preceq \xi_k \kappa J_{k-1}, \label{eq:JGJ}
		\\
		e^{Mr_{k-1}/2}\lambda_k \leq& \left(1-\frac{1}{2\kappa}\right)^k \lambda_0 \label{eq:lam_dec}
		\\
		r_{k} \leq& \left(1-\frac{1}{2\kappa}\right)^k \lambda_0, \label{eq:r_dec}
	\end{align}
	and
	\begin{align}
	\xi_k = e^{M \sum_{i=0}^{k-1} \left(r_{i-1}+r_i\right)} \leq \frac{3}{2}, \label{eq:xi}
	\end{align}
where $ r_i = \|u_i\|_{x_i}$, $J_{-1}=\nabla^2 f(x_0)$, $r_{-1}=0$, $\xi_0=1$.
\end{theorem}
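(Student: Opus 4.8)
The plan is to prove the four assertions \eqref{eq:JGJ}--\eqref{eq:xi} \emph{simultaneously} by induction on $k$, since they are mutually dependent. The base case $k=0$ is immediate: with $G_0=L\cdot I$ and $J_{-1}=\nabla^2 f(x_0)$, the spectral bounds $\mu I\preceq\nabla^2 f(x_0)\preceq L I$ give $J_{-1}\preceq G_0\preceq\kappa J_{-1}=\xi_0\kappa J_{-1}$; the estimate \eqref{eq:lam_dec} reduces to $\lambda_0\le\lambda_0$ since $r_{-1}=0$; the bound \eqref{eq:r_dec} reads $r_0\le\lambda_0$, which is exactly \eqref{eq:r_l} with $r_{-1}=0$; and $\xi_0=1$ by the empty-sum convention.

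For the inductive step I assume \eqref{eq:JGJ}--\eqref{eq:xi} for all indices up to $k$ and establish them at index $k+1$ in an order chosen so that every invoked fact is already available. First, $\xi_{k+1}\le\tfrac32$: since $\xi_{k+1}=e^{M\sum_{i=0}^{k}(r_{i-1}+r_i)}$ depends only on $r_{-1},\dots,r_k$, the inductive decay \eqref{eq:r_dec} with $r_{-1}=0$ yields $\sum_{i=0}^{k}(r_{i-1}+r_i)\le 2\sum_{i=0}^{k}r_i\le 2\lambda_0\sum_{i\ge 0}(1-\tfrac{1}{2\kappa})^i=4\kappa\lambda_0$, so that $M\sum_{i=0}^{k}(r_{i-1}+r_i)\le 4M\kappa\lambda_0\le\ln\tfrac32$ by the hypothesis \eqref{eq:local}. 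Second, the decrease \eqref{eq:lam_dec}: I apply Lemma~\ref{lem:dec} to get $\lambda_{k+1}\le(1+\tfrac{Mr_k}{2})\theta(J_k,G_k,u_k)\lambda_k$, then bound the step measure $\theta(J_k,G_k,u_k)$ through the relative spectrum of $G_k$ against $J_k$. The inductive sandwich $J_{k-1}\preceq G_k\preceq\xi_k\kappa J_{k-1}$ is converted into a sandwich against $J_k$ by chaining the self-concordance estimates \eqref{eq:AJA}, \eqref{eq:AJA1} through $\nabla^2 f(x_k)$, producing a contraction of order $1-\tfrac{1}{\kappa}$; absorbing the correction factors $(1+\tfrac{Mr_\cdot}{2})$ into the exponential prefactor via $1+t\le e^t$ gives a net one-step factor at most $1-\tfrac{1}{2\kappa}$. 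Third, \eqref{eq:r_dec} at $k+1$ follows directly from \eqref{eq:r_l}, namely $r_{k+1}\le(1+\tfrac{Mr_k}{2})\lambda_{k+1}$, combined with the just-proved \eqref{eq:lam_dec}.

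Finally, the sandwich \eqref{eq:JGJ} at $k+1$. Since $G_{k+1}=\sr(J_k,\tG_k,u_k)$ with $\tG_k=(1+\tfrac{Mr_{k-1}}{2})(1+\tfrac{Mr_k}{2})G_k$, Lemma~\ref{lem:AGA} reduces the claim to $J_k\preceq\tG_k\preceq\xi_{k+1}\kappa J_k$. For the lower bound I chain $J_k\preceq(1+\tfrac{Mr_k}{2})\nabla^2 f(x_k)\preceq(1+\tfrac{Mr_k}{2})(1+\tfrac{Mr_{k-1}}{2})J_{k-1}\preceq\tG_k$ via \eqref{eq:AJA}, \eqref{eq:AJA1} and the inductive $J_{k-1}\preceq G_k$. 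For the upper bound I use $G_k\preceq\xi_k\kappa J_{k-1}$ and convert $J_{k-1}$ back to $J_k$, so that the accumulated corrections collapse to $e^{M(r_{k-1}+r_k)}$ and produce exactly $\xi_{k+1}=\xi_k\,e^{M(r_{k-1}+r_k)}$.

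The main obstacle is the entanglement of the four invariants together with the fact that the one-step contraction is governed by the spectrum of $G_k$ relative to $J_k$, whereas both the SR1 update and the induction naturally deliver bounds relative to $J_{k-1}$. The correction strategy (updating from $\tG_k$ rather than $G_k$) and the exponential prefactors $e^{Mr_{k-1}/2}$ are precisely the devices that keep the lower bound $J_k\preceq G_{k+1}$ \emph{exact} while letting the self-concordance slack be absorbed geometrically. The delicate point is to sequence the sub-steps so that $\xi_{k+1}\le\tfrac32$ and the decay \eqref{eq:lam_dec} are each derived from index-$\le k$ data alone, thereby breaking any apparent circular dependence on the index-$(k+1)$ sandwich.
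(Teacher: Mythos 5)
Your proposal is correct and follows essentially the same route as the paper's proof: a simultaneous induction on all four invariants, using strong self-concordance (Eqns.~\eqref{eq:AJA}, \eqref{eq:AJA1}) to convert the $J_{k-1}$-sandwich on $G_k$ into a $J_k$-sandwich, Lemma~\ref{lem:AGA} to propagate $J_k \preceq G_{k+1} \preceq \tG_k$, and Lemma~\ref{lem:dec} together with two-sided bounds $-\alpha_k G_k \preceq G_k - J_k \preceq \beta_k G_k$ to extract the $\bigl(1-\tfrac{1}{2\kappa}\bigr)$ contraction. The only difference is the (harmless, arguably cleaner) reordering of sub-steps within the inductive step, which makes explicit that $\xi_{k+1}$ depends only on $r_{-1},\dots,r_k$.
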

\begin{proof}
	We will prove these results by induction.
	For $k = 0$, Eqn.~\eqref{eq:JGJ} holds by $J_{-1} = \nabla^2 f(x_0)$ and $G_0 = L\cdot I$.
	Eqn.~\eqref{eq:r_dec} holds because of Eqn.~\eqref{eq:r_l} and $r_{-1} = 0$.
	Eqn.~\eqref{eq:lam_dec} and \eqref{eq:xi} hold trivially.
	
	We assume that Eqn.~\eqref{eq:JGJ}-\eqref{eq:xi} hold for $ 0 \le j \le k$.
	We first prove Eqn.~\eqref{eq:JGJ} and~\eqref{eq:xi}. 
	We have
	\begin{align*}
		J_{k-1} 
		\overset{\eqref{eq:AJA1}}{\succeq} 
		\left(1 + \frac{Mr_{k-1}}{2}\right)^{-1} \nabla^2 f(x_k) 
		\overset{\eqref{eq:AJA}}{\succeq} \left(1 + \frac{Mr_{k-1}}{2}\right)^{-1} \left(1 + \frac{Mr_k}{2}\right)^{-1} J_k.
	\end{align*}
Similarly, we have
\begin{align}
	J_{k-1} \preceq  	\left(1 + \frac{Mr_{k-1}}{2}\right) \nabla^2 f(x_k)
	\preceq   \left(1+\frac{Mr_{k-1}}{2}\right)\left(1+\frac{Mr_{k}}{2}\right) J_k. \label{eq:JJ1}
\end{align}
Combining with Eqn.~\eqref{eq:JGJ}, we can obtain that
	\begin{equation}
		\label{eq:JGJ_1}
		\frac{1}{\left(1+\frac{Mr_k}{2}\right)\left(1+\frac{Mr_{k-1}}{2}\right)}J_{k} \preceq G_k  \preceq \xi_k \kappa J_{k}\left(1+\frac{Mr_{k-1}}{2}\right)\left(1+\frac{Mr_{k}}{2}\right). 
	\end{equation}
Thus, it holds that $\tG_k \succeq J_k$.
By Lemma~\ref{lem:AGA}, we can obtain that
\begin{equation}
	J_{k} \preceq G_{k+1} \preceq \widetilde{G}_k \preceq \xi_k \kappa J_{k}\left(1+\frac{Mr_{k-1}}{2}\right)^2\left(1+\frac{Mr_{k}}{2}\right)^2 = \xi_{k+1} \kappa J_k. \label{eq:JGJ_2}
\end{equation}
Hence,
\begin{align*}
	\xi_{k+1} 
	\leq&
	\xi_k \left(1+\frac{Mr_{k-1}}{2}\right)^2\left(1+\frac{Mr_{k}}{2}\right)^2
	\le e^{M\sum_{i=0}^{k} \left(r_{i-1}+r_{i}\right)}\xi_0
	\\
	\le&
	e^{2M\sum_{i=0}^{k} r_i} 
	\overset{\eqref{eq:r_dec}}{\le} e^{4\kappa M\lambda_0}
	\overset{\eqref{eq:local}}{\le} \frac{3}{2}.
\end{align*}
Therefore, Eqn.~\eqref{eq:JGJ} and Eqn.~\eqref{eq:xi} are satisfied for $k+1$.

Moreover, by Eqn.~\eqref{eq:JGJ_1}, we can obtain that
\begin{equation*}
	-\alpha_k G_k \preceq G_k-J_k\preceq \beta_k G_k, 
\end{equation*}
with 
\begin{align}
	\alpha_k = \left(1+\frac{Mr_k}{2}\right)\left(1+\frac{Mr_{k-1}}{2}\right)-1, \quad \beta_k = 1- \frac{1}{\kappa\xi_k\left(1+\frac{Mr_{k{-}1}}{2}\right)\left(1+\frac{Mr_k}{2}\right)}. \label{eq:ab}
\end{align}
Thus,
\begin{align*}
\theta\left(J_k,G_k,u_k\right) 
\overset{\eqref{eq:theta}}{=}
\left(
\frac{u_k^\top (G_k - J_k)J_k^{-1}(G_k - J_k) u_k}{u_k^\top G_k J_k^{-1} G_k u_k} 
\right)^{1/2}
\le
\max\left\{\alpha_k, \beta_k\right\}. \label{eq:theta_up}
\end{align*}
By Lemma~\ref{lem:dec}, we have
\begin{align*}
\lambda_{k+1}
\le 
\max\left\{ \alpha_k, \beta_k \right\} \left(1 + \frac{Mr_k}{2}\right) \lambda_k
\le
e^{\frac{Mr_k}{2}} \max\left\{ \alpha_k, \beta_k \right\} \cdot \lambda_k.
\end{align*}
Furthermore, we have
\begin{equation*}
	\begin{aligned}
		e^{M r_{k}} \alpha_k  \overset{\eqref{eq:ab}}{=}& e^{M r_{k}} \left[ \frac{Mr_k}{2}+\frac{Mr_{k-1}}{2}+\frac{Mr_k}{2}\cdot\frac{Mr_{k-1}}{2}\right] \\
		\overset{\eqref{eq:r_dec}}{\le}& e^{M r_{k}} \left(2M\lambda_0\right) \leq 2e^{M\lambda_0} M\lambda_0 
		\overset{\eqref{eq:local}}{\le} \left(\frac{3}{2}\right)^{\frac{1}{4\kappa}} \cdot \frac{\ln\frac{3}{2}}{2\kappa} \leq 1-\frac{1}{2\kappa}.\\
		e^{M r_{k}} \beta_k \overset{\eqref{eq:ab}}{=}&  e^{M r_{k}} \left[1- \frac{1}{\kappa\xi_k\left(1+\frac{Mr_{k{-}1}}{2}\right)\left(1+\frac{Mr_k}{2}\right)}\right] \\
		\leq& e^{M\lambda_0} \left[1-\frac{2}{3\kappa e^{M\lambda_0}}\right] = e^{M\lambda_0}-\frac{2}{3\kappa} \\ 
		\overset{\eqref{eq:local}}{\le}& \left( \frac{3}{2} \right)^{1/4\kappa} -\frac{2}{3\kappa} \leq 1+\frac{1}{8\kappa}-\frac{2}{3\kappa} \leq 1-\frac{1}{2\kappa}.
	\end{aligned}
\end{equation*}
Therefore,
\begin{align}
e^{Mr_k/2}\lambda_{k+1} \leq e^{M r_{k}} \max\{\alpha_k, \beta_k\}  \lambda_{k} \leq \left(1-\frac{1}{2\kappa}\right)\lambda_k \leq \left(1-\frac{1}{2\kappa}\right)e^{Mr_{k-1}/2}\lambda_k \leq \left(1-\frac{1}{2\kappa}\right)^{k+1}\lambda_0. \label{eq:lam_dec_1}
\end{align}
That is Eqn.~\eqref{eq:lam_dec} holds for $k+1$.
	
Finally, we will prove Eqn.~\eqref{eq:r_dec} holds for $k+1$ and we have
\begin{align*}
r_{k+1} 
\overset{\eqref{eq:r_l}}{\le}
\left(1 + \frac{M r_k}{2}\right) \lambda_f(x_{k+1})
\le
e^{M r_k/2}\lambda_f(x_{k+1}) 
\overset{\eqref{eq:lam_dec_1}}{\le}
\left(1-\frac{1}{2\kappa}\right)^{k+1}\lambda_0.
\end{align*}
Thus, Eqn.~\eqref{eq:r_dec} also holds for $k+1$.
This concludes the proof.
\end{proof}

Theorem~\ref{thm:base} directly implies that $J_k \preceq \tG_k$ which is the key in our correction strategy.
\begin{corollary}
Suppose $x_0$ in Algorithm~\ref{algo:sr1-update-1} satisfy Eqn.~\eqref{eq:local}, then it holds that
\begin{align}
J_k \preceq \tG_k, \quad G_k \preceq 3 \kappa \cdot \nabla^2 f(x_k).  \label{eq:JtG}
\end{align}
\end{corollary}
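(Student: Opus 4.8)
The plan is to derive both inequalities in Eqn.~\eqref{eq:JtG} directly from relations already established in (or embedded within the proof of) Theorem~\ref{thm:base}, so that essentially no new computation is required beyond tracking constants.

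For the first claim $J_k \preceq \tG_k$, I would start from the lower bound in Eqn.~\eqref{eq:JGJ_1}, namely
\[
\frac{1}{\left(1+\frac{Mr_k}{2}\right)\left(1+\frac{Mr_{k-1}}{2}\right)} J_k \preceq G_k,
\]
which holds for every $k$ under hypothesis~\eqref{eq:local}. Multiplying both sides by the positive scalar $\left(1+\frac{Mr_{k-1}}{2}\right)\left(1+\frac{Mr_k}{2}\right)$ and recalling that $\tG_k = \left(1+\frac{Mr_{k-1}}{2}\right)\left(1+\frac{Mr_k}{2}\right) G_k$ yields $J_k \preceq \tG_k$ at once. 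This is exactly the monotone scaling already observed right after Eqn.~\eqref{eq:JGJ_1} in the proof of Theorem~\ref{thm:base}, so this half is essentially free.

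For the second claim I would chain three bounds. First, the upper bound in Eqn.~\eqref{eq:JGJ} gives $G_k \preceq \xi_k \kappa J_{k-1}$. Second, applying Eqn.~\eqref{eq:AJA1} with $x=x_{k-1}$ and $y=x_k$ (so that $r=r_{k-1}$ and the associated integral Hessian is precisely $J_{k-1}$) gives $J_{k-1} \preceq \left(1+\frac{Mr_{k-1}}{2}\right)\nabla^2 f(x_k)$. Combining the two, $G_k \preceq \xi_k\left(1+\frac{Mr_{k-1}}{2}\right)\kappa\,\nabla^2 f(x_k)$, and it remains only to bound the scalar prefactor by $3$.

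The constant check is the one place a short manipulation is needed, and it is the step I would treat most carefully. Using $\xi_k \leq \tfrac{3}{2}$ from Eqn.~\eqref{eq:xi}, and bounding $r_{k-1}\leq\lambda_0$ via Eqn.~\eqref{eq:r_dec} together with the smallness condition~\eqref{eq:local}, which gives $M\lambda_0 \leq \frac{\ln(3/2)}{4\kappa}\leq\frac{\ln(3/2)}{4}$, one finds $1+\frac{Mr_{k-1}}{2}\leq 2$ with substantial margin. Hence $\xi_k\left(1+\frac{Mr_{k-1}}{2}\right)\leq \tfrac{3}{2}\cdot 2 = 3$, which delivers $G_k \preceq 3\kappa\,\nabla^2 f(x_k)$. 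I do not anticipate a genuine obstacle here: the constant $3\kappa$ is deliberately loose, and the only care required is to ensure that the accumulated correction factor $\xi_k$ and the single-step factor $1+\frac{Mr_{k-1}}{2}$ are each separately controlled by~\eqref{eq:local} rather than inadvertently double-counted.
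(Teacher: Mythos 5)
Your proposal is correct and follows essentially the same route as the paper: the first claim is the lower bound of Eqn.~\eqref{eq:JGJ_1} rescaled by the definition of $\tG_k$, and the second chains $G_k \preceq \xi_k\kappa J_{k-1}$ with $J_{k-1}\preceq (1+\frac{Mr_{k-1}}{2})\nabla^2 f(x_k)$ and the bounds $\xi_k\le \frac{3}{2}$, $1+\frac{Mr_{k-1}}{2}\le 2$. The only cosmetic difference is that the paper re-derives the lower bound of \eqref{eq:JGJ_1} inside the corollary's proof rather than citing it, which changes nothing of substance.
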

\begin{proof}
First, we have
\begin{align*}
	J_{k-1} 
	\overset{\eqref{eq:AJA1}}{\succeq} 
	\left(1 + \frac{Mr_{k-1}}{2}\right)^{-1} \nabla^2 f(x_k) 
	\overset{\eqref{eq:AJA}}{\succeq} \left(1 + \frac{Mr_{k-1}}{2}\right)^{-1} \left(1 + \frac{Mr_k}{2}\right)^{-1} J_k.
\end{align*}
Combining with Eqn.~\eqref{eq:JGJ}, we can obtain that
\begin{equation*}
	\frac{1}{\left(1+\frac{Mr_k}{2}\right)\left(1+\frac{Mr_{k-1}}{2}\right)}J_{k} \preceq G_k. 
\end{equation*}
Using the definition $\tG_k \triangleq \left(1+\frac{Mr_k}{2}\right)\left(1+\frac{Mr_{k-1}}{2}\right) G_k$, we can obtain the first result.

Second, by Eqn.~\eqref{eq:JGJ} and~\eqref{eq:JJ1}, we can obtain
\begin{align*}
G_k \stackrel{\eqref{eq:JGJ}}{\preceq} \xi_k\kappa J_{k-1} 
\stackrel{\eqref{eq:JJ1}}{\preceq } \xi_k \left(1 + \frac{Mr_{k-1}}{2}\right) \kappa \nabla^2 f(x_k)
\stackrel{\eqref{eq:xi}\eqref{eq:r_dec}\eqref{eq:local}}{\preceq } 3\kappa \cdot \nabla^2 f(x_k),
\end{align*}
which concludes the proof.
\end{proof}
Based on above results, we are ready to give the superlinear convergence rate of SR1 with correction strategy (Algorithm~\ref{algo:sr1-update-1}).

\begin{theorem}
	\label{thm:ll}
	Suppose that the initial point $x_0$ is chosen sufficiently
	close to the solution, 
	\[ M\lambda_0\leq \frac{\ln\frac{3}{2}}{4\kappa}. \] 
	Then, for all $k \geq 1$, Algorithm~\ref{algo:sr1-update-1} satisfies
	\begin{equation}
		\lambda_f(x_k) \le \left(e^{\frac{2n \ln (e \kappa)}{k}}-1\right)^{k/2}\sqrt{3\kappa} \cdot \lambda_f(x_0). \label{eq:sr}
	\end{equation}
\end{theorem}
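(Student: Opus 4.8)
The plan is to transplant the potential-function argument of Theorem~\ref{thm:l} to the non-quadratic setting, using $V(A,G)=\ln\det(GA^{-1})$ from Eqn.~\eqref{eq:tau}, but now with the \emph{moving} target $J_k$ and the \emph{corrected} matrix $\tG_k$. Concretely I would set $V_k\triangleq V(J_k,\tG_k)$ and the scalar $g_k\triangleq\norm{\nabla f(x_k)}_{G_k^{-1}}$. Since the correction guarantees $J_k\preceq\tG_k$ by Eqn.~\eqref{eq:JtG}, each $V_k\ge 0$; and since $G_{k+1}=\sr(J_k,\tG_k,u_k)$, Eqn.~\eqref{eq:dv} gives $V(J_k,\tG_k)-V(J_k,G_{k+1})=\ln(1+\nu_k^2)$ with $\nu_k=\nu(J_k,\tG_k,u_k)$ (Eqn.~\eqref{eq:theta}).

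The first real step is to convert $V(J_k,G_{k+1})$ into the next potential $V_{k+1}=V(J_{k+1},\tG_{k+1})$. Writing $\tG_{k+1}=(1+\delta_{k+1})G_{k+1}$ contributes an additive $n\ln(1+\delta_{k+1})$, while switching the target from $J_k$ to $J_{k+1}$ contributes $\ln(\det J_k/\det J_{k+1})$. Telescoping over $i=0,\dots,k-1$ and discarding the nonnegative terminal term $-V_k$ leaves
\[
\sum_{i=0}^{k-1}\ln(1+\nu_i^2)\ \le\ V_0+n\sum_{i=0}^{k-1}\ln(1+\delta_{i+1})+\ln\frac{\det J_0}{\det J_k}.
\]
I would then bound the right-hand side by $2n\ln(e\kappa)$: here $V_0=V(J_0,\tG_0)\le n\ln\bigl((1+\tfrac{Mr_0}{2})^2\kappa\bigr)$ since $\tG_0=(1+\tfrac{Mr_0}{2})L\cdot I$ and $J_0\succeq(1+\tfrac{Mr_0}{2})^{-1}\mu I$, while both correction sums are controlled by $\ln(1+x)\le x$ together with the comparison \eqref{eq:JJ1}, so each is at most $nM\sum_i r_i$, which by the geometric decay \eqref{eq:r_dec} is $\le 2n\kappa M\lambda_0\le\tfrac{n\ln(3/2)}{2}$ via \eqref{eq:local}. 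The gap between $n\ln\kappa$ and $2n\ln(e\kappa)=2n+2n\ln\kappa$ comfortably absorbs these $O(n)$ terms.

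With the sum bounded, the convexity of $t\mapsto\ln(1+e^t)$, exactly as in Theorem~\ref{thm:l}, upgrades the arithmetic-mean bound to $[\prod_{i=0}^{k-1}\nu_i^2]^{1/k}\le e^{2n\ln(e\kappa)/k}-1$. It then remains to lower bound each $\nu_i^2$ by a telescoping gradient ratio. Applying Lemma~\ref{lem:sig1} with $A=J_i$, $G=\tG_i$, $G_+=G_{i+1}$, and using $\tG_i u_i=-(1+\delta_i)\nabla f(x_i)$ together with the secant relation $J_i u_i=\nabla f(x_{i+1})-\nabla f(x_i)$, I obtain
\[
\nu_i^2\ \ge\ \frac{\norm{\delta_i\nabla f(x_i)+\nabla f(x_{i+1})}_{G_{i+1}^{-1}}^2}{(1+\delta_i)\,g_i^2}.
\]
Finally I would convert $g_k$ back to $\lambda_f$: from $G_k\preceq 3\kappa\nabla^2 f(x_k)$ (Eqn.~\eqref{eq:JtG}) one gets $g_k\ge\lambda_f(x_k)/\sqrt{3\kappa}$, whereas $G_0=L\cdot I\succeq\nabla^2 f(x_0)$ gives $g_0\le\lambda_f(x_0)$; once $\prod_i g_{i+1}^2/g_i^2=g_k^2/g_0^2$ telescopes this produces the $\sqrt{3\kappa}$ prefactor of Eqn.~\eqref{eq:sr}.

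The main obstacle is precisely this last telescoping. In the quadratic case the correction is absent ($\delta_i=0$) and the displayed ratio collapses exactly to $g_{i+1}^2/g_i^2$; here the correction factor $1+\delta_i$ in the denominator and the cross-term $\delta_i\nabla f(x_i)$ in the numerator spoil the collapse. The plan is to show they perturb $\prod_i\nu_i^2$ away from $g_k^2/g_0^2$ only by a bounded multiplicative constant. Using $\norm{\nabla f(x_i)}_{G_{i+1}^{-1}}\le\sqrt{3\kappa}\,g_i$ (from $G_{i+1}\succeq\tfrac{1}{3\kappa}G_i$, itself a consequence of \eqref{eq:JGJ} and \eqref{eq:JJ1}) together with the summability $\sum_i\delta_i\le M\sum_i r_i\le 2\kappa M\lambda_0=O(1)$ from \eqref{eq:r_dec} and \eqref{eq:local}, the aggregate distortion is a fixed constant foldable into the $\sqrt{3\kappa}$ prefactor and into the extra slack carried by $\ln(e\kappa)$ rather than $\ln\kappa$; moreover in any single step where the cross-term could dominate one necessarily has $g_{i+1}\lesssim g_i/\sqrt{\kappa}$, i.e.\ near-exact progress at that step, so the clean bound is recovered in either regime.
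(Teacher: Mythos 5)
Your overall architecture coincides with the paper's: the same potential $V(J_k,\tG_k)$, the same bookkeeping for the correction factor and the moving target $J_k\to J_{k+1}$ (each step costing $n\ln a_{k+1}$ with $a_k=1+\delta_k$ and $\sum_k(a_k-1)\le\ln\frac{3}{2}$ by \eqref{eq:r_dec} and \eqref{eq:local}), the same convexity step for $t\mapsto\ln(1+e^t)$, and the same final conversion through $G_k\preceq 3\kappa\,\nabla^2 f(x_k)$. The gap sits exactly where you flag ``the main obstacle,'' and your proposed resolution does not deliver \eqref{eq:sr}. From your (correct) inequality $\nu_i^2\ge\norm{\delta_i\nabla f(x_i)+\nabla f(x_{i+1})}_{G_{i+1}^{-1}}^2/\bigl((1+\delta_i)g_i^2\bigr)$, you estimate the cross term $2\delta_i\,\nabla f(x_i)^\top G_{i+1}^{-1}\nabla f(x_{i+1})$ by Cauchy--Schwarz with $\norm{\nabla f(x_i)}_{G_{i+1}^{-1}}\le\sqrt{3\kappa}\,g_i$, which gives $\nu_i^2\ge\frac{1}{1+\delta_i}\bigl(t_i^2-2\delta_i\sqrt{3\kappa}\,t_i\bigr)$ with $t_i=g_{i+1}/g_i$. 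To telescope you need $\ln(1+\nu_i^2)\ge\ln(1+c\,t_i^2)-\epsilon_i$ with $c\ge 1$ and $\sum_i\epsilon_i=O(n)$. Keeping $c\ge1$ forces $\epsilon_i\gtrsim\delta_i\sqrt{\kappa}\,t_i$, and since a priori $t_i$ can be as large as $\sqrt{3\kappa}$, this accumulates to $O(\kappa\sum_i\delta_i)=O(\kappa)$, not $O(n)$; your two-regime alternative instead salvages only a coefficient $c\approx\tfrac12$ in the good regime, and any $c<1$ reappears in the final bound as a multiplicative $c^{-k/2}$ that grows with $k$ and cannot be folded into the $\sqrt{3\kappa}$ prefactor. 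Either way the result is strictly weaker than the stated theorem.

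The missing idea is that the cross term need not be estimated at all: it is computed exactly from the secant equation $G_{i+1}u_i=J_iu_i$, i.e.\ $G_{i+1}^{-1}J_iu_i=u_i$. Writing $\nabla f(x_i)=-G_iu_i$ and $\nabla f(x_{i+1})=-(G_i-J_i)u_i$,
\begin{align*}
\nabla f(x_i)^\top G_{i+1}^{-1}\nabla f(x_{i+1})
= u_i^\top G_iG_{i+1}^{-1}G_iu_i - u_i^\top G_iG_{i+1}^{-1}J_iu_i
= \norm{\nabla f(x_i)}_{G_{i+1}^{-1}}^2 - g_i^2 \ \ge\ -g_i^2,
\end{align*}
so the per-step distortion is an additive $-2(a_i-1)/a_i$ with no $\kappa$-dependence, yielding the paper's bound $\nu_i^2\ge\frac{1}{a_i}t_i^2-\frac{2(a_i-1)}{a_i}$. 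Since $a_i\le\frac32$, one gets $\ln(1+\nu_i^2)\ge\ln\bigl(2-a_i+t_i^2\bigr)-\ln a_i\ge\ln(1+2t_i^2)-\ln(2a_i)$, i.e.\ coefficient $c=2\ge1$ and total additive loss $n\ln 2+\sum_i\ln a_i=O(n)$, which is exactly the slack that $2n\ln(e\kappa)$ versus $n\ln\kappa$ provides. With this single identity in place, the rest of your plan goes through essentially verbatim.
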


\begin{proof}
Denote  
\begin{align}
a_k \triangleq \left(1+\frac{Mr_{k-1}}{2}\right)\left(1+\frac{Mr_k}{2}\right) \mbox{ and } g_k \triangleq \norm{\nabla f(x_k)}_{G_k^{-1}}. \label{eq:g}
\end{align}
Then it holds that
\begin{align}
a_k-1 = \frac{Mr_{k-1}}{2} + \frac{Mr_{k}}{2} + \frac{Mr_{k-1}}{2}\cdot \frac{Mr_{k}}{2}
\overset{\eqref{eq:r_dec}}{\le} \left(1-\frac{1}{2\kappa}\right)^{k-1} 2M\lambda_0 \leq \left(1-\frac{1}{2\kappa}\right)^{k-1} \frac{\ln\frac{3}{2}}{2\kappa}. \label{eq:ak}
\end{align}
Thus, $a_k$ satisfies 
\begin{align}
	1 \le a_k \le \frac{3}{2}. \label{eq:ak12}
\end{align}

First, by Eqn.~\eqref{eq:JtG}, we can obtain that $\tG_k \succeq J_k$.
By Lemma~\ref{lem:AGA}, we can obtain that $G_{k+1} \succeq J_k$.
Thus, we can obtain that
\begin{align*}
	V(J_k, \tG_k) - V(J_k, G_{k+1}) 
	\overset{\eqref{eq:dv}}{=}
	\ln \left(1+ \nu(J_k, \tG_k, u_k)^2\right). 
\end{align*}
Moreover, note that
\begin{align*}
	\nu(J_k, \tG_k, u_k)^2 & \overset{\eqref{eq:tt}}{\ge}
	\frac{u_k^\top (\tG_k - J_k)G_{k+1}^{-1} (\tG_k - J_k)}{u_k^\top \tG_k u_k} \\
	& =
	\frac{u_k^\top (G_k - J_k) G_{k+1}^{-1} (G_k - J_k)u_k}{u_k^\top \tG_k u_k}
	+
	2(a_k-1)\frac{u_k^\top G_k G_{k+1}^{-1} (G_k - J_k) u_k }{u_k^\top \tG_k u_k} \\
	&\quad +
	\frac{(a_k -1)^2u_k^\top G_k^\top G_{k+1}^{-1} G_k u_k}{u_k^\top \tG_k u_k}
	\\
	& \ge \frac{u_k^\top (G_k - J_k) G_{k+1}^{-1} (G_k - J_k)u_k}{u_k^\top \tG_k u_k} - 2(a_k - 1) \frac{u_k^\top G_kG_{k+1}^{-1}J_k u_k}{u_k^\top \tG_k u_k} \\
	&\quad +(a_k^2-1)\frac{u_k^\top G_k^\top G_{k+1}^{-1} G_k u_k}{u_k^\top \tG_k u_k} \\
	&\geq \frac{u_k^\top (G_k - J_k) G_{k+1}^{-1} (G_k - J_k)u_k}{u_k^\top \tG_k u_k} - 2(a_k - 1) \frac{u_k^\top G_kG_{k+1}^{-1}J_k u_k}{u_k^\top \tG_k u_k}.
\end{align*}
Furthermore, from SR1 update in Algorithm \ref{algo:sr1-update}, we have $G_{k+1}u_k=J_ku_k$, thus
\begin{align*}
	u_k^\top \tG_k G_{k+1}^{-1} J_k u_k = u_k^\top \tG_k u_k.
\end{align*}
Therefore, we can obtain that 
\begin{align*}
	\nu(J_k, \tG_k, u_k)^2 \ge \frac{u_k^\top (G_k - J_k) G_{k+1}^{-1} (G_k - J_k)u_k}{a_k u_k^\top G_k u_k} - \frac{2(a_k - 1)}{a_k} = \frac{g_{k+1}^2}{a_k g_k^2}-\frac{2(a_k - 1)}{a_k}.
\end{align*}
where the last equality is because $G_k u_k = -\nabla f(x_k)$, $J_ku_k = \nabla f(x_{k+1}) - \nabla f(x_k)$ and definition of $g_k$ (by Eqn.~\eqref{eq:g}).

Consequently, we can obtain that
\begin{align}
	V(J_k, \tG_k) - V(J_k, G_{k+1}) \geq \ln\left(1+\frac{1}{a_k}\cdot \frac{g_{k+1}^2}{g_k^2}-\frac{2(a_k - 1)}{a_k}\right).\label{eq:p1}
\end{align}

Furthermore, we have
\begin{align}
	V(J_k, G_{k+1}) - V(J_k, \tG_{k+1}) = \ln\det(J_k^{-1}G_{k+1}) - \ln\det(a_{k+1}J_{k}^{-1}G_{k+1})
	= -n\ln (a_{k+1}),  \label{eq:p2}
\end{align}
and
\begin{equation}\label{eq:p3}
	V(J_k,\tG_{k+1}) - V(J_{k+1}, \tG_{k+1}) = \ln\det(J_k^{-1}\tG_{k+1}) - \ln\det(J_{k+1}^{-1}\tG_{k+1}) 
	\stackrel{\eqref{eq:JJ1}}{\geq} -n\ln(a_{k+1}).
\end{equation}
Therefore, we obtain
\begin{equation*}
	\begin{aligned}
		&V(J_k, \tG_k) - V(J_{k+1}, \tG_{k+1}) 
		\\
		=&V(J_k, \tG_k) - V(J_k, G_{k+1})
		+ V(J_k, G_{k+1}) - V(J_k, \tG_{k+1}) 
		\\
		& + V(J_k,\tG_{k+1}) - V(J_{k+1}, \tG_{k+1}) \\
		\overset{\eqref{eq:p1}\eqref{eq:p2}\eqref{eq:p3}}{\geq}&
		\ln\left(1+\frac{1}{a_k}\cdot \frac{g_{k+1}^2}{g_k^2}-\frac{2(a_k - 1)}{a_k}\right)-2n\ln(a_{k+1}) \\
		=& \ln\left(2-a_k + \frac{g_{k+1}^2}{g_k^2}\right)-2n\ln(a_{k+1}) -\ln a_k \\
		\stackrel{\eqref{eq:ak12}}{\geq}& \ln\left(\frac{1}{2} + \frac{g_{k+1}^2}{g_k^2}\right)-2n\ln(a_{k+1}) -\ln a_k \\
		=& \ln\left(1 + \frac{2g_{k+1}^2}{g_k^2}\right)-2n\ln(a_{k+1}) -\ln (2a_k).
	\end{aligned}
\end{equation*}
Summing up the above equation, we can obtain that
\begin{align*}
	& \sum_{i=0}^{k-1} \ln\left(1 + \frac{2g_{i+1}^2}{g_i^2}\right)\\
	\leq& \sum_{i=0}^{k-1} \left[ V(J_i, \tG_i) - V(J_{i+1}, \tG_{i+1}) + 2n\ln(a_{i+1})+\ln (2a_i)\right] \\
	=& V(J_0, \tG_0) - V(J_k, \tG_k) + 2n\sum_{i=1}^{k}(a_i - 1)+n\ln 2 +\sum_{i=0}^{k-1} (a_i-1)
	\\
	\stackrel{\eqref{eq:ak}}{\le}& V(J_0, \tG_0) + 2n \ln\frac{3}{2}+n\ln 2+\ln \frac{3}{2}\\
	\leq& V(J_0, \tG_0) + 2n.
\end{align*}
Since $\mu \cdot I \preceq J_0 $, and $\tG_0 = \left(1 + \frac{Mr_0}{2}\right)G_0 \le \frac{9L}{8} \cdot I$, we can  obtain that
\begin{align*}
	\sum_{i=0}^{k-1} \ln\left(1 + \frac{2g_{i+1}^2}{g_i^2}\right)
	\leq \frac{9n}{8}\ln\kappa + 2n \leq 2n \ln (e \kappa).
\end{align*}
By the convexity of function $t \mapsto \ln(1+e^t)$, it holds that
\begin{equation*}
	\begin{aligned}
		\frac{2n \ln (e \kappa)}{k} &\geq \frac{1}{k}\sum_{i=0}^{k-1} \ln\left(1 + \frac{2g_{i+1}^2}{g_i^2}\right) = 
		\frac{1}{k} \sum_{i=0}^{k-1} \ln\left( 1 + e^{\ln \left(2g_{i+1}^2/g_i^2\right) } \right)
		\\ &\ge
		\ln\left( 1+ e^{\frac{1}{k} \sum_{i=0}^{k-1}\ln \left(2g_{i+1}^2/g_i^2\right) } \right) =
		\ln\left( 1+ e^{\frac{1}{k}\ln \left(2^kg_{k}^2/g_0^2\right) } \right) \\
		&= \ln\left( 1+ 2\left[\frac{g_{k}^2}{g_0^2}\right]^{1/k} \right)
	\end{aligned}
\end{equation*}
Rearranging the above equation, we can obtain that
\begin{align*}
	g_k \le \left(e^{\frac{2n \ln (e \kappa)}{k}}-1\right)^{k/2} g_0.
\end{align*}

Finally, we have
\begin{align*}
	\lambda_f(x_k) \stackrel{\eqref{eq:JtG}}{\le} \sqrt{3\kappa} \cdot g_k, \mbox{ and } g_0 \le \lambda_f(x_0).
\end{align*}
Therefore, we can obtain that
\begin{align*}
\lambda_f(x_k) \le \left(e^{\frac{2n \ln (e \kappa)}{k}}-1\right)^{k/2}\sqrt{3\kappa} \cdot \lambda_f(x_0).
\end{align*}
\end{proof}

Next, we give the following corollary which provides a more clear convergence rate description.
\begin{corollary}
For all $k > \frac{4n\ln(e\kappa)}{\ln 2}$, Algorithm~\ref{algo:sr1-update-1} satisfies that
\begin{align}
	\lambda_{f}(x_k) \le \left( \frac{4n\ln(e\kappa)}{k} \right)^{k/2} \sqrt{3\kappa} \cdot \lambda_f(x_0). \label{eq:sr1}
\end{align}
\end{corollary}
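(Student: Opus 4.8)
The plan is to read off the clean rate directly from the bound already proved in Theorem~\ref{thm:ll}, reducing everything to an elementary estimate on the transcendental factor. Recall that Theorem~\ref{thm:ll} gives $\lambda_f(x_k)\le\left(e^{2n\ln(e\kappa)/k}-1\right)^{k/2}\sqrt{3\kappa}\,\lambda_f(x_0)$, so it suffices to bound the factor $\left(e^{2n\ln(e\kappa)/k}-1\right)^{k/2}$ by $\left(\frac{4n\ln(e\kappa)}{k}\right)^{k/2}$ whenever $k>\frac{4n\ln(e\kappa)}{\ln 2}$. First I would set the shorthand $t\triangleq\frac{2n\ln(e\kappa)}{k}$, so that the target factor is $(e^t-1)^{k/2}$ and its desired upper bound is $(2t)^{k/2}$, since $2t=\frac{4n\ln(e\kappa)}{k}$. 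The hypothesis on $k$ then translates cleanly: $k>\frac{4n\ln(e\kappa)}{\ln 2}$ is equivalent to $2t<\ln 2$, hence $t<\tfrac{\ln 2}{2}<\ln 2$, which places $t$ safely inside the admissible interval $[0,\ln 2]$.

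The core of the argument is the elementary inequality $e^t-1\le 2t$ on this range, which I would derive from convexity of $s\mapsto e^s$. On $[0,\ln 2]$ the exponential lies below its chord joining $(0,1)$ and $(\ln 2,2)$, giving $e^t\le 1+\frac{t}{\ln 2}$ and therefore $e^t-1\le\frac{t}{\ln 2}\le 2t$, where the last step uses $\frac{1}{\ln 2}<2$. The chord bound is legitimate precisely because $t<\ln 2$, as just checked. Raising the inequality $e^t-1\le 2t$ to the power $k/2$ (both sides nonnegative and $x\mapsto x^{k/2}$ increasing on $[0,\infty)$) yields $\left(e^t-1\right)^{k/2}\le(2t)^{k/2}=\left(\frac{4n\ln(e\kappa)}{k}\right)^{k/2}$. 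Substituting this into the bound of Theorem~\ref{thm:ll} produces exactly Eqn.~\eqref{eq:sr1}.

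I do not anticipate any genuine obstacle: all the analytic work is already contained in Theorem~\ref{thm:ll}, and what remains is the one-line convexity estimate together with a verification that the hypothesis on $k$ keeps $t$ in $[0,\ln 2]$. The only point deserving a moment of care is using the range condition in the correct direction, namely that taking $k$ large forces $t$ small, which is what makes the linear upper bound on $e^t-1$ valid; once that is noted, the conclusion follows immediately.
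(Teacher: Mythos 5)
Your proof is correct and follows essentially the same route as the paper: both reduce the corollary to the elementary estimate $e^{t}-1\le 2t$ for $t=\frac{2n\ln(e\kappa)}{k}$, using the hypothesis on $k$ only to place $t$ in the admissible range. The sole (immaterial) difference is the one-line justification of that estimate — the paper uses $e^{t}\le\frac{1}{1-t}$ for $t<\frac12$, while you use the chord bound $e^{t}\le 1+\frac{t}{\ln 2}$ on $[0,\ln 2]$; both are valid.
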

\begin{proof}
	Since $e^t \le \frac{1}{1 - t} = 1 + \frac{t}{1 - t}$ for all $t < 1$, we have, for all $k > \frac{4n\ln(e\kappa)}{\ln 2}$,
	\begin{align*}
		e^{\frac{2n\ln(e\kappa)}{k}} - 1 \le \frac{2n\ln(e\kappa)/k}{1 - 2n\ln(e\kappa)/k} \le \frac{4n\ln(e\kappa)}{k}.
	\end{align*}
Combining with Eqn.~\eqref{eq:sr}, we obtain the result.
\end{proof}

\begin{remark}
The main proofs in this section are similar to ones of \citet{rodomanov2021new} and we adopt the main idea of \citet{rodomanov2021new}  which proved convergence rates of quasi-Newton in the restricted Broyden family to prove the convergence rate of the SR1 algorithm.
However, our work is \emph{not} a simple extension of \citet{rodomanov2021new}.

First and the most important, because the SR1 algorithm does \emph{not} belong to the restricted Broyden family, the nice properties holding for quasi-Newton in the restricted Broyden family may no longer hold for the SR1 algorithm.
This is the reason why there's even no local superlinear result of SR1  similar with the ones of BFGS and DFP \citep{nocedal2006numerical}. 
A simple extension of \citet{rodomanov2021new} to the SR1 algorithm can not obtain the results in this paper.
Thus, we introduce the measure function Eqn.~\eqref{eq:theta} whose key ingredient is a factor in the inverse update of SR1.
This is the key to our convergence analysis and is the main difference between the proofs in this paper and ones of \citet{rodomanov2021new}.

Second, to conquer the problem that the SR1 algorithm may suffer from the ill-posed case that  there is no symmetric rank-one updating formula satisfying the secant equation, we introduce the correction strategy which is not used in the classical quasi-Newton methods in \citet{rodomanov2021new}.
Accordingly, this causes several important differences in proofs compared with the ones of \citet{rodomanov2021new}.   
\end{remark}

\subsection{Stability of SR1 with Correction Strategy}

In Section~\ref{subsec:sr1_cs}, we give the reason why we need to introduce the correction strategy.
Now, we will mathematically show the advantages of correction strategy.
Even for a convex quadratic function, the vanilla SR1 algorithm maybe have steps on which there is no symmetric rank-1 update that satisfies the secant equation. 
That is, it holds that $(J_k - G_k) u_k \neq 0$ but $u_k^\top  (J_k - G_k) u_k = 0$ (refer to Chapter 6.2 of \citet{nocedal2006numerical}).
This will cause numerical instabilities and even breakdown of the SR1.
However, this unwanted situation will not happen for   our \msr. 
This is because the positive semi-definiteness of $\tG_k - J_k$ in  \msrs guarantees that once $u_k^\top (\tG_k - J_k)u_k = 0$, then it holds that $ G_ku =  J_k u_k$, that is the updating formula is simply $G_{k+1} = \tG_k$.
We summarize above propositions as follows.
\begin{proposition}
	\label{prop:def}
If $u_k^\top (\tG_k - J_k) u_k = 0$, then it holds that $(\tG_k - J_k) u_k = 0$. 
The SR1 update rule conducts that $G_{k+1} = \tG_k$. 
\end{proposition}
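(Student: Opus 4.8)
The plan is to derive the proposition as a direct consequence of the positive semi-definiteness $\tG_k - J_k \succeq 0$, which was already established in Eqn.~\eqref{eq:JtG}. The core content is the elementary linear-algebra fact that for a symmetric positive semi-definite matrix $B$, the scalar equality $u^\top B u = 0$ forces $B u = 0$; everything else is bookkeeping against the definitions.

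First I would recall that, under the standing hypothesis $M\lambda_0 \le \frac{\ln\frac32}{4\kappa}$, the corollary to Theorem~\ref{thm:base} gives $J_k \preceq \tG_k$, so that $B \triangleq \tG_k - J_k$ is symmetric positive semi-definite. I would then write $B = B^{1/2}B^{1/2}$ using its symmetric positive semi-definite square root, so that the assumed identity reads
\[
0 = u_k^\top B u_k = u_k^\top B^{1/2}B^{1/2} u_k = \norm{B^{1/2}u_k}^2 .
\]
This yields $B^{1/2}u_k = 0$, and hence $B u_k = B^{1/2}\bigl(B^{1/2}u_k\bigr) = 0$, i.e.\ $(\tG_k - J_k)u_k = 0$, which is the first claim.

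Finally I would invoke the definition of the SR1 update in Eqn.~\eqref{eq:Gp} with target matrix $A = J_k$, current approximation $G = \tG_k$, and direction $u = u_k$. Since $(\tG_k - J_k)u_k = 0$ lands exactly in the first branch of that case distinction, the update returns $G_{k+1} = \sr(J_k, \tG_k, u_k) = \tG_k$, giving the second claim.

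The argument is short, and the only genuine mathematical ingredient is the positive semi-definite implication above; the real difficulty sits \emph{upstream}, in guaranteeing $\tG_k \succeq J_k$. That relation is precisely what the correction factor $\bigl(1 + \tfrac{Mr_{k-1}}{2}\bigr)\bigl(1 + \tfrac{Mr_k}{2}\bigr)$ was engineered to secure and what Eqn.~\eqref{eq:JtG} delivers, so within the proposition itself I anticipate no obstacle beyond citing that bound correctly. Without the correction strategy the matrix $\tG_k - J_k$ need not be positive semi-definite, the implication $u_k^\top(\tG_k-J_k)u_k=0 \Rightarrow (\tG_k-J_k)u_k=0$ can fail, and exactly the ill-posed case described before the proposition reappears; I would emphasize this contrast to make clear why the hypothesis $J_k \preceq \tG_k$ is indispensable.
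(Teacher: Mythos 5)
Your proof is correct and follows essentially the same route as the paper's: both reduce the claim to the fact that for a positive semi-definite matrix $B=\tG_k-J_k$ (guaranteed by Eqn.~\eqref{eq:JtG}), $u_k^\top B u_k=0$ forces $Bu_k=0$, and then read off $G_{k+1}=\tG_k$ from the first branch of Eqn.~\eqref{eq:Gp}. The only cosmetic difference is that you factor $B=B^{1/2}B^{1/2}$ while the paper writes $B=LL^\top$; the argument is otherwise identical.
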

\begin{proof}
	Since $\tG_k - J_k$ is positive semi-definite, then we and represent that $\tG_k - J_k = LL^\top$.
	Then $u_k^\top  (\tG_k - J_k) u_k = 0$ implies  $u_k^\top L L^\top u_k = 0$ which leads to $L^\top u_k = 0$.
	By Theorem~7.2.7 of \citet{horn2012matrix}, we can obtain that $(\tG_k - J_k) u_k = 0 $.
	In this case, by Eqn.~\eqref{eq:Gp}, we can obtain that $G_{k+1} = \sr(J_k, \tG_k, u_k) = \tG_k$.
\end{proof}

\section{Numerical Experiments}
\label{sec:exp}
In Section~\ref{sec:gen}, we propose to use the correction strategy to make the vanilla SR1 more numerically stable.
Thus, we will empirically validate this point and study how correction strategy affects the convergence properties of  SR1 in this section.

We conduct experiments on the widely used logistic regression defined as follows:
\begin{equation*}
	f(x) = \frac{1}{m}\sum_{i=1}^{m} \log [1+\exp(-b_i\langle a_i, x\rangle)] + \frac{\gamma}{2}\|x\|^2, \label{eq:rlg}
\end{equation*}
where $a_i \in \RR^{n}$ is the $i$-th input vector,  $b_i\in\{-1,1\}$ is the corresponding label, and $\gamma \ge 0$ is the regularization parameter.
In our experiments, we set $\gamma = \frac{1}{10m}$.
We conduct experiments on four datasets `mushrooms', `a9a', `w8a', and `madelon'.
Because the exact value $M$ in Eqn.~\eqref{eq:M} is commonly unknown, we set different values of $M$ to evaluate how correction strategy affects the convergence rate of our modified SR1 (referred as \msr).
We compare \msrs of different $M$'s with the vanilla SR1 algorithm (referred as \emph{SR1}).
To simulate the local convergence, we use the same initialization after running three standard Newton steps to make $f(x_0) - f(x_*)$ small enough.

\begin{figure*}[t]
	\subfigtopskip = 0pt
	\begin{center}
		\centering
		\subfigure[`a9a']{\includegraphics[width=75mm]{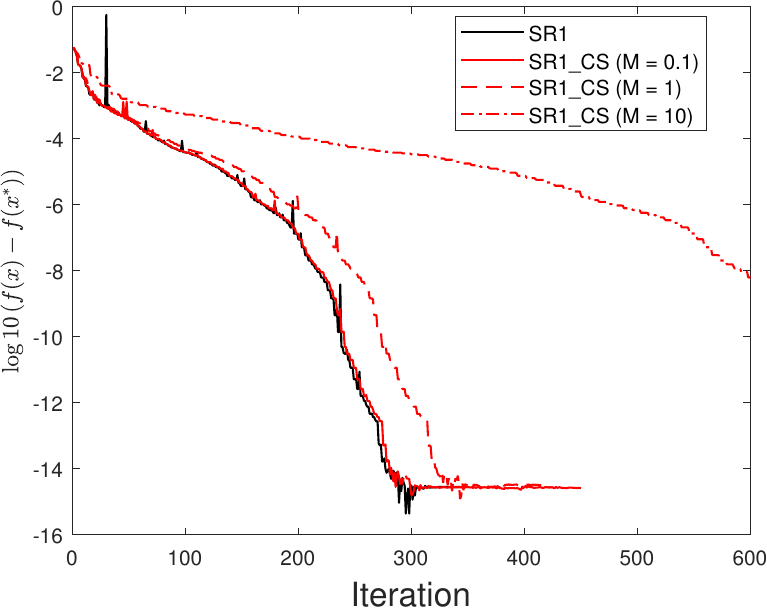}}~
		\subfigure[`madelon']{\includegraphics[width=75mm]{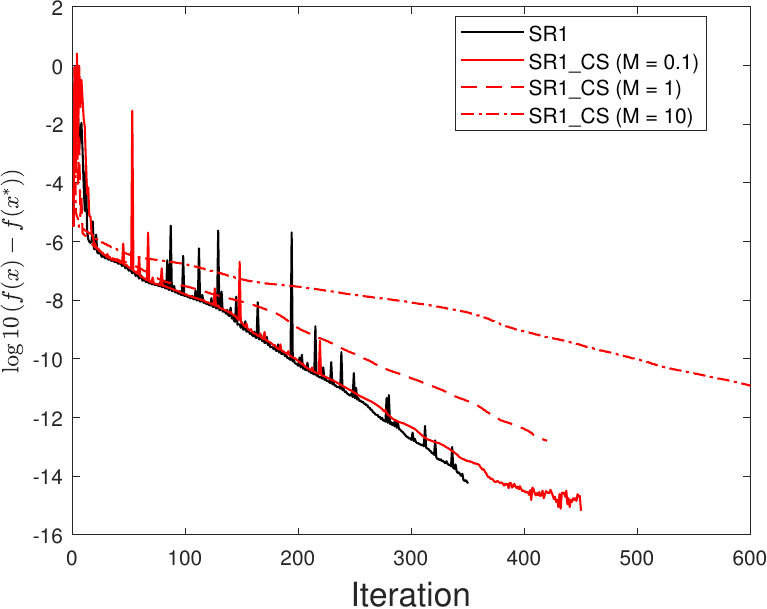}}
		\\
		\subfigure[`mushrooms']{\includegraphics[width=75mm]{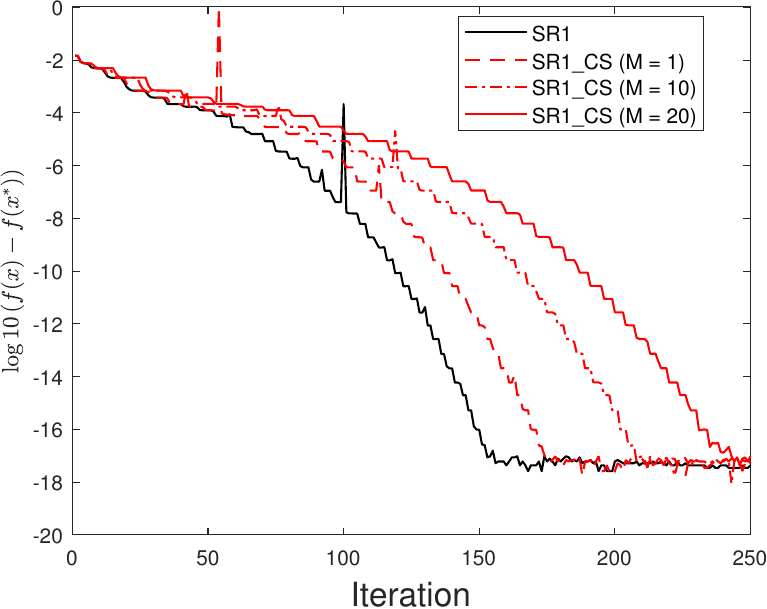}}~
		\subfigure[`w8a']{\includegraphics[width=75mm]{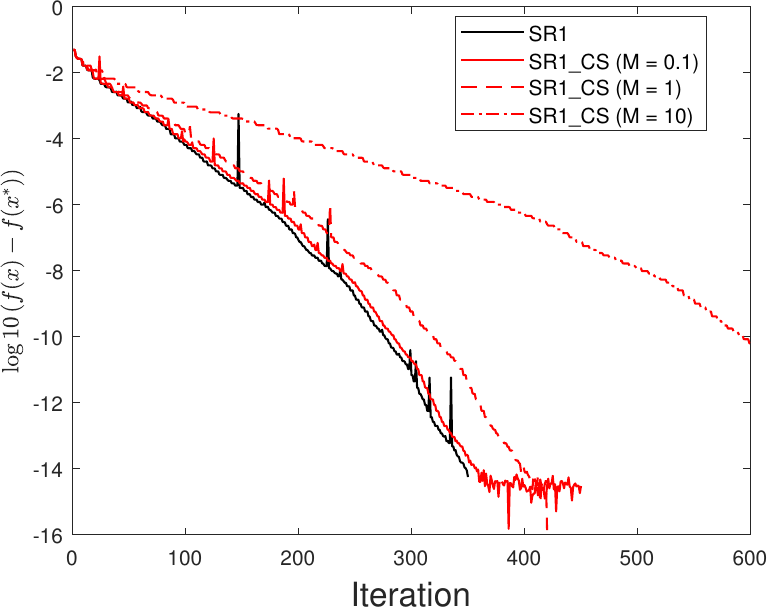}}
	\end{center}
	\vskip -0.2in
	\caption{Comparison between SR1 and \msr}
	\vskip -0.2in
	\label{fig:avazu}
\end{figure*} 

We report the experiment result in Figure~\ref{fig:avazu}. 
We can observe that our modified SR1 algorithm with the correction strategy can effectively improve the numerical stability of SR1.
This phenomenon is more obvious when $M$ is large.
By the correction strategy, \msrs commonly converges stably especially when $x_k$ is close to the optima.
In contrast, vanilla SR1 suffers from instabilities even $x_k$ is close to the optima.
This can be clearly observed from Figure~\ref{fig:avazu}.
 
Figure~\ref{fig:avazu} also shows that the numerical stability brought by the correction strategy is  at the expense of the fast  convergence rate. 
A large $M$ commonly leads to a stable convergence property.
However, a large $M$ will lead to a slow convergence rate.
Whatever, our\msrs can still achieve superlinear convergence rates no matter the value of $M$.
Our experiment results show that for the logistic regression, $M = 1$ is commonly a good choice which can help to achieve numerical stability but keeps a fast convergence rate.

\section{Discussion}
\label{sec:disc}

Let us compare the convergence rates obtained in this paper with the previously known ones of quasi-Newton recently obtained in \citep{rodomanov2021greedy,rodomanov2021new,rodomanov2021rates,lin2021faster}.  
We mainly discuss the general strongly convex function case since for the quadratic function, all SR1 algorithms run at most $n$ steps while other quasi-Newton methods such as DFP and BFGS commonly require much more steps.

First, let us consider the starting moment of superlinear convergence.
Our SR1 algorithm with the correction strategy starts its superlinear convergence after 
\begin{align}
	K_0^{\mathrm{SR1\_CS}} = \frac{2n \ln (e\kappa)}{\ln 2}, \label{eq:K_sr1}
\end{align}
steps by Theorem~\ref{thm:ll}.
We first compare our result with the greedy and randomized quasi-Newton methods proposed and analyzed in \citep{rodomanov2021greedy,lin2021faster}.
These methods also apply the correction strategy. 
For the greedy SR1 and randomized SR1, their starting moment is \citep{lin2021faster}
\begin{align}
K_0^{\mathrm{greedy\_SR1}} =  4\max\{n,\kappa\} \ln(2n\kappa). \label{eq:K_sr0}
\end{align}
For the randomized BFGS (rand\_BFGS), its starting moment is \citep{lin2021faster}
\begin{align}
K_0^{\mathrm{rand\_BFGS}} = \max\left\{n, 2\kappa  \right\}\ln(4n\kappa). \label{eq:K_rbfgs}
\end{align} 

Comparing Eqn.~\eqref{eq:K_sr1} with \eqref{eq:K_sr0} and \eqref{eq:K_rbfgs}, we can observe that our \msrs starts much earlier than the greedy SR1 and randomized BFGS if the condition number $\kappa$ is much larger than the dimension $n$.

Now, we consider the starting moment for classical BFGS and DFP obtained \citet{rodomanov2021rates} but improved in \citet{rodomanov2021new}:
\begin{align}
	K_0^{\mathrm{BFGS}} = 8n \ln (2\kappa),\quad\mbox{ and }\quad K_0^{\mathrm{DFP}} = 18n\kappa \ln(2\kappa). \label{eq:K_bfgs}
\end{align}
Comparing Eqn.~\eqref{eq:K_sr1} with \eqref{eq:K_bfgs}, the starting moment of \msrs is a little earlier than the classical BFGS but much earlier than classical DFP.

Now, we will discuss the convergence rates. 
We will mainly compare the convergence rate of \msrs with the ones of the classical BFGS and greedy SR1 because classical DFG converges much slower than BFGS and the greedy SR1 also outperforms or is comparable to other greedy and randomized quasi-Newton methods \citep{lin2021faster}. 
The classical BFGS has the following convergence rate \citep{rodomanov2021new}:
\begin{align*}
	\lambda_f(x_k) 
	\le
	\left(\frac{5}{2} \left(e^{\frac{13n\ln(2\kappa)}{6k}} - 1\right) \right) ^{k/2} \sqrt{\frac{3\kappa}{2}} \cdot \lambda_f(x_0).
\end{align*}
Comparing the above equation with Eqn.~\eqref{eq:sr}, we can conclude that \msrs has a comparable convergence rate with the classical BFGS.

For the greedy SR1, \citet{lin2021faster} gave the following convergence rate:
\begin{align*}
	\lambda_{f}(x_{k+1}) 
\le
2n\kappa^2\left(1 - \frac{1}{n}\right)^k \lambda_{f}(x_k),
\end{align*}
which implies that
\begin{align*}
\lambda_f(x) \le (2n\kappa^2)^k \cdot e^{-\frac{k(k-1)}{2n}} \lambda_f(x_0). \label{eq:gr}
\end{align*}
We will compare the above rate with the one in Eqn.~\eqref{eq:sr1}.
We have
\begin{equation*}
	\begin{aligned}
		\frac{2n\kappa^2\left(1 - \frac{1}{n}\right)^k}{\left(\frac{4n \ln (e \kappa)}{k}\right)^{k/2} \sqrt{3\kappa}} &= \exp\left\{-\frac{k(k-1)}{2n}+k\ln(2n\kappa^2)-\frac{k}{2}\ln\frac{4n \ln (e \kappa)}{k} -\frac{\ln 3\kappa}{2} \right\} \\
		&= \exp\left\{-\frac{k}{2}\left(\frac{k-1}{n}-\ln k\right)+ \frac{k}{2}\left(\ln(4n^2\kappa^4)-\ln\left(4n \ln (e \kappa)\right)\right) -\frac{\ln 3\kappa}{2} \right\} \\
		&= \exp\left\{-\frac{k}{2}\left(\frac{k-1}{n}-\ln k\right)+ \frac{k}{2}\ln\left(\frac{n\kappa^4}{\ln\kappa+1}\right)-\frac{\ln 3\kappa}{2} \right\} \\
		&\geq \exp\left\{-\frac{k}{2}\left(\frac{k-1}{n}-\ln k\right)+ \frac{k}{2}\left[\ln\left(\frac{n\kappa^4}{\kappa}\right)-\ln 3\kappa \right]\right\} \\
		&\geq \exp\left\{-\frac{k}{2}\left(\frac{k-1}{n}-\ln k\right)+ \frac{k}{2}\ln\left(\frac{n\kappa^2}{3}\right)\right\} \\
		&=\exp\left\{-\frac{k}{2}\left(\frac{k-1}{n}-\ln k-\ln\left(\frac{n\kappa^2}{3}\right)\right)\right\}.
	\end{aligned}
\end{equation*}
Note that $t-\ln(tn+1)$ is increasing when $t\geq 1$. 
Thus when $$k \geq K\triangleq 3n\ln (2n\kappa)+1,$$ that is $(k-1)/n \geq 1$. 
It holds that
\begin{equation*}
	\begin{aligned}
		\frac{k-1}{n}-\ln k &= t-\ln(tn+1)-\ln\left(\frac{n\kappa^2}{3}\right) \\
		&\geq 3\ln2n\kappa-\ln\left[3n\ln(2n\kappa)+1\right]-\ln\left(\frac{n\kappa^2}{3}\right) \\
		&= \ln\frac{24n^2\kappa}{3n\ln(2n\kappa)+1} \geq \ln\frac{24n^2\kappa}{3n\ln(2n\kappa)+n} \\
		&= \ln\frac{8n\kappa}{\ln(2n\kappa)+1/3} \geq \ln\frac{24n^2\kappa}{2n\kappa} = \ln (12n) > 0. 
	\end{aligned}
\end{equation*}
This implies  
\begin{align*}
\frac{2n\kappa^2\left(1 - \frac{1}{n}\right)^k}{\left(\frac{4n \ln (e \kappa)}{k}\right)^{k/2} \sqrt{3\kappa}} < 1.
\end{align*}
Therefore, once the greedy SR1 begins to achieve the superlinear convergence rate, that is $k\geq K^{\mathrm{greedy\_SR1}}_0\geq K$, the greed SR1 converges faster than our \msr.
\section{Conclusion}
\label{sec:conc}

In this paper,  we have studied the famous quasi-Newton method: SR1 update, and presented the  explicit superlinear convergence rate of the SR1 algorithm which only involves the gradients of the objective function for the first time,  to the best of our knowledge.
We have shown that the SR1 algorithm with correction strategy has a convergence rate of the form $\left(\frac{4n\ln(e\kappa)}{k}\right)^{k/2}$ for general smooth strongly convex functions.
We also show that the vanilla SR1 algorithm also achieves the superlinear rate and will find the optima at most $n$ steps for quadratic functions with initial Hessian approximation satisfying $\nabla^2 f(x)\preceq G_0$.
In this paper, the analysis requires that $G_0 \succeq \nabla^2 f(x_0)$ to obtain the explicit superlinear convergence rate.
However, in real applications, even this condition is violated, vanilla SR1 commonly can achieve good performance.
To analyze this case, we leave it as the future work.

\bibliography{ref.bib}
\bibliographystyle{apalike2}

\pagebreak
\appendix

\end{document}